\title
{
Bases for the derivation modules of
two-dimensional multi-Coxeter arrangements
and universal derivations
\thanks
{{\it Mathematics Subject Classification: 32S22
}}}
\author{Atsushi WAKAMIKO}
\date{}
\theoremstyle{plain}
\newtheorem{theorem}{Theorem}[section]
\newtheorem{proposition}[theorem]{Proposition}
\newtheorem{lemma}[theorem]{Lemma}
\newtheorem{definition}[theorem]{Definition}
\newtheorem{example}[theorem]{Example}
\newtheorem{proof}{Proof.}
\def\Z{\mathbb{Z}}  
\def\R{\mathbb{R}}  
\def\im{\mathop{\mathrm{im}}\nolimits}      
\font\b=cmr10 scaled \magstep4
\def\bigsymu#1{\smash{\lower1.7ex\hbox{\b #1}}}  
\def\Der{\mathrm{Der}}   
\def\A{\mathcal{A}}      
\def\bfk{\mathbf{k}}    
\def\eop{\square}                  
\begin{document}

\maketitle  

\thispagestyle{empty}

{\small
\begin{center}
\textbf{Abstract}
\end{center}

Let $\A$ be an irreducible Coxeter arrangement
and $\bfk$ be a multiplicity of $\A$.
We study the derivation module $D(\A, \bfk)$.
Any
two-dimensional irreducible Coxeter arrangement
with even number of lines 
is decomposed into two orbits under the action of 
the
Coxeter group.
In this paper, 
we will {explicitly} construct a
basis for $D(\A, \bfk)$
assuming $\bfk$ is constant 
on each orbit.  
Consequently we will determine the exponents of $(\A, \bfk)$ 
under this assumption.   
For this purpose we develop a theory
of universal derivations and introduce
a map to deal with our exceptional cases.

\medskip

{\it Keywords: Coxeter arrangement, Coxeter group,
multi-arrangement,
primitive derivation, 
multi-derivation module, 
logarithmic differential form}

\section{Introduction}
Let $V$ be an $\ell$-dimensional Euclidean space with inner product $I$.
Let $S$ denote the symmetric algebra of the dual space $V^{*}$ over
$\R$.
Denote the $S$-module of $\R$-linear derivations
of $S$ by $\Der_{S}$.  
Let $F$ be the field of quotients of $S$
and $\Der_{F} $ be the $F$-vector space of $\R$-linear derivations of $F$.   
Let $W \subseteq O(V,I)$ be a finite irreducible reflection group 
(a Coxeter group) and
$\A$ be the corresponding \textbf{Coxeter arrangement},
i.e., $\A$ is the set of all reflecting hyperplanes of $W$.
An arbitrary map $\mathbf{k} \colon \A \to \Z$ is 
called a \textbf{multiplicity} of
$\A$.
We say that
the pair $(\A,\textbf{k})$ is a \textbf{multi-Coxeter arrangement}.
The $S$-module $D(\A, \textbf{k})$,
defined in Section 2, of derivations associated with
$(\A, \bfk)$
was introduced by Ziegler \cite{Ziegler}
when $\im \bfk \subseteq \Z_{\geq 0} $ and in
\cite{Abe} \cite{Abe-Terao} for any multiplicity $\bfk$.
We say that $(\A, \bfk)$ is {\bf free} if $D(\A, \bfk)$ is 
a free $S$-module. 
The polynomial degrees (=pdeg) \cite{Orlik-Terao}  
of a homogeneous $S$-basis for $D(\A, \bfk)$ are called 
the {\bf exponents} of $(\A, \bfk)$.  
If $\bfk\equiv 1$, then $D(\A, \bfk)$ coincides with the $S$-module
$D(\A)$ of logarithmic derivations and $(\A, \bfk)$ is free (e.g., \cite{Saito 1}\cite{Orlik-Terao}).  
More in general, when $\bfk$ is
a constant function, 
$(\A, \bfk)$ is free and we can explicitly
construct a basis using basic invariants and a
primitive derivation as
in
\cite{Abe-Terao}\cite{Terao 2}.
In the case that $\bfk$ is not constant, however,
we do not know 
how we can
construct a basis for $D(\A, \bfk)$
even
when
$\ell=2$.
The
main result of this paper
gives an explicit construction of a basis for the module 
$D(\A, \bfk)$ 
when
$\ell=2$ and the multiplicity $\bfk$ is $W$-equivariant:
$\bfk(H) = \bfk(w H)$ for any $w\in W$ and
$H\in\A$.

The structure of this paper is as follows:
In Section 2, we 
define and discuss
the {\bf
universal derivations} which will be used in
the subsequent sections.  
Theorem \ref{Dinverseuniversal}
is the key result there.
In Sections 3
and 4, we assume that $\ell = 2$.
Then  
$W = I_{2} (h)$ is isomorphic to the
dihedral group of order $2h$.   When $h$ is odd, 
$\A$ itself is the unique $W$-orbit.
Thus $\bfk$ is constant and we can construct a 
basis {  (e.g., see
\cite{Terao 2}\cite{Abe-Yoshinaga}\cite{Abe}\cite{Abe-Terao}).   
}
So we may
assume that 
$h$ is even with $h\geq 4$.
In this case, 
we have the $W$-orbit decomposition:
$\A = \A_{1} \cup \A_{2}$.
Then both $\A_{1} $ and $\A_{2} $ are again 
irreducible arrangements if $h \geq 6$
(or equivalently
if
$W \neq B_{2}$).  
The corresponding irreducible
Coxeter groups $W_{1} $ and $W_{2} $ 
are both isomorphic to $I_{2} (\frac{h}{2})$.
For $a_{1}, a_{2} \in\Z$, let $(a_{1} , a_{2} )$ denote the multiplicity
$\bfk : \A \rightarrow \Z$ with 
$\bfk(H) = a_{1} \,\,(H\in\A_{1} )$ 
and
$\bfk(H) = a_{2} \,\,(H\in\A_{2} )$.
We classify the set $\{(a_{1} , a_{2} ) \mid a_{1} , a_{2} \in\Z\}$ 
into sixteen cases. 
The first fourteen 
{  cases}
are listed in 
Table \ref{Table1}.
\begin{table}[h]
\label{Table1}
\begin{center}
\begin{tabular}{|c|c|c|c|}
\hline
$(a_{1} , a_{2} )$ & $\zeta$  & $\theta_{1}, \theta_{2}$  
& basis for $D(\A, (a_{1} , a_{2} ))$\\
\hline
 $(4p+1, 4q+1)$ 
& $E^{(2p,2q)}$ 
& $E, I^{*}(dP_{2})$    
&\multirow{14}{*}{\hfill { 
$\nabla_{\theta_{1}} \zeta,
\nabla_{\theta_{2}} \zeta$}
\hfill}
\\
\cline{1-3}
 $(4p-1, 4q-1)$  
& $E^{(2p,2q)}$ 
& $D, I^{*}(dQ/Q)$
& \\
\cline{1-3}
{ 
$(4p-1,4q+1)$} 
& 
{ 
$E^{(2p,2q)}$ 
}
& 
{ 
$I^{*}(dQ_{1}/Q_{1}), E$
}
& \\
\cline{1-3}
{ 
$(4p+1, 4q-1)$ 
}
& 
{ 
$E^{(2p,2q)}$ 
}
& 
{ 
$I^{*}(dQ_{2}/Q_{2}), E$
}
& \\
\cline{1-3}
{ 
 $(4p+1, 4q)$ 
}
& 
{ 
$E^{(2p,2q)}$ 
}
& 
{ 
\multirow{2}{*}{$E, I^{*}(dQ_{2})$}
}
& \\
\cline{1-2}
{ 
$(4p+3, 4q+2)$ 
}
& 
{ 
$E^{(2p+1,2q+1)}$
}
&
& 
\\
\cline{1-3}
$(4p-1, 4q)$ 
& $E^{(2p,2q)}$ 
& \multirow{2}{*}{
{ 
$D_{1}, I^{*}(dQ_{1}/Q_{1})$}}    
&
\\
\cline{1-2}
$(4p+1, 4q+2)$ 
& $E^{(2p+1,2q+1)}$ 
&
&
\\
\cline{1-3}
$(4p, 4q+1)$ 
& $E^{(2p,2q)}$ 
& \multirow{2}{*}{{ 
$E, I^{*}(dQ_{1})$}}    
&
\\
\cline{1-2}
$(4p+2, 4q+3)$ 
& $E^{(2p+1,2q+1)}$ 
& 
&
\\
\cline{1-3}
{ 
$(4p,4q-1)$ 
}
& 
{ 
$E^{(2p,2q)}$ 
}
& 
{ 
\multirow{2}{*}{$D_{2}, I^{*}(dQ_{2}/Q_{2})$}    
}
&
\\
\cline{1-2}
{ 
$(4p+2, 4q+1)$ 
}
& 
{ 
$E^{(2p+1,2q+1)}$ 
}
& 
&
\\
\cline{1-3}
{ 
$(4p,4q)$ 
}
& { 
$E^{(2p,2q)}$ 
}
& \multirow{2}{*}{{ 
$\partial_{x_{1} }, \partial_{x_{2} } $}    
}&
\\
\cline{1-2}
{ 
$(4p+2,4q+2)$ 
}
& { 
$E^{(2p+1,2q+1)}$ 
}
& 
&
\\
\hline
\end{tabular}
\end{center}
\caption{Bases for $D(\A, (a_{1} , a_{2} ))$
 (ordinary cases) ($p \geq 0$ or $q \geq 0$)} 
\end{table}
}
We call the fourteen cases {\bf ordinary}.
The remaining two cases,
which are when either $(a_{1}, a_{2}) = (4p, 4q+2)$ or
$(4p+2, 4q)$,  
are called to be
{\bf exceptional} because our basis construction method
in the ordinary cases does not work for the exceptional
ones.  The exceptional cases are listed in Table 2. 
The derivations $\zeta = E^{(s, t)} $ are universal. 
We will explain how to read the two Tables in Sections 3
and 4.
Section 3 is devoted to the ordinary cases
where the main tool is the {\bf
Levi-Civita connection}
\[
\nabla : 
\Der_{F} \times
\Der_{F} \rightarrow
\Der_{F}
\]
with respect to $I$ together with {\bf
primitive derivations}
$D$ and $D_{i}$ corresponding to 
$W$ and $W_{i} \, (i=1 , 2)$
respectively.
The recipe here is Abe-Yoshinaga's
theory developed in \cite{Abe-Yoshinaga}
and \cite{Abe}. 
The main ingredient in Section 4
is 
the
maps
\begin{align*} 
&\Phi^{(1)}_{\zeta}    :
\Der_{S} 
\rightarrow 
D(\A, (4p+2, 4q)),
\\
&\Phi^{(2)}_{\zeta}    :
\Der_{S} 
\rightarrow 
D(\A, (4p, 4q+2)),
\end{align*} 
defined by
\[
\Phi_{\zeta}^{(1)}  (\theta) :=
Q_{1} (\nabla_{\theta} \,\zeta) -
(4p+1)   \theta(Q_{1}) \zeta,
\,\,
\Phi_{\zeta}^{(2)}  (\theta) :=
Q_{2} (\nabla_{\theta} \,\zeta) -
(4q+1)   \theta(Q_{2}) \zeta,
\]
where $Q_{i} $ is a defining polynomial for $\A_{i}
\,\,(i=1, 2)$
and $\zeta$ is $(2p, 2q)$-universal. 
\begin{table}[h]
\label{Table2}
\begin{center}
\begin{tabular}{|c|c|c|c|}
\hline
$(a_{1}, a_{2})$ & $\zeta$  & $\theta_{1}, \theta_{2}$  
& basis for $D(\A, (a_{1}, a_{2}))$\\
\hline
$(4p+2, 4q)$ 
& $E^{(2p,2q)}$ 
& $\partial_{x_{1} }, \partial_{x_{2} }  $ 
& $
\Phi^{(1)}_{\zeta} (\theta_{1}),
\Phi^{(1)}_{\zeta} (\theta_{2})
$ \\
\hline
$(4p, 4q+2)$ 
& $E^{(2p,2q)}$ 
& $\partial_{x_{1} }, \partial_{x_{2} }  $ 
& $
\Phi^{(2)}_{\zeta} (\theta_{1}),
\Phi^{(2)}_{\zeta} (\theta_{2})
$ \\
\hline
\end{tabular}
\end{center}
\caption{Bases for $D(\A, (a_{1}, a_{2}))$
(exceptional cases)
$(p \geq 0$ or $q \geq 0$)}   
\end{table}
Actually in Sections 3 and 4, we will construct bases
only
when either $p  \geq 0$ or $q \geq 0$
in Tables 1 and 2.
Lastly we cover 
the remaining cases
using the duality:
the existence of a non-degenerate $S$-bilinear pairing 
\[
\Omega(\A, \bfk) \times
D(\A, \bfk)
\longrightarrow
S,
\]
where $\Omega(\A, \bfk)
$ is the $S$-module of logarithmic differential $1$-forms
associated with the multi-Coxeter
arrangement
$(\A, \bfk)$
defined in \cite{Ziegler}, \cite{Abe} and
\cite{Abe-Terao2}.
We conclude this paper with Section 5 in which we 
present Table 4 showing the exponents
of $(\A, \bfk)$. 

\medskip

{ 
{\bf Remark}
In addition to $I_{2} (h)$ with $h\geq 4$ even,
there exist two kinds of irreducible Coxeter arrangements
which have two $W$-orbits: $B_{\ell}$ $(\ell \geq 2)$ and
$F_{4}$.    For each of these two cases,
when $\bfk$ is an equivariant multiplicity,
a basis for $D(\A, \bfk)$ is constructed
with a method similar to the one applied to
the ordinary cases here.   Details are found
in  \cite{Abe-Terao-Wakamiko}. 
}

\section{Universal derivations}

Let $\A$ be an irreducible Coxeter arrangement.
For each hyperplane $H \in \A$, choose a linear form
$
\alpha_H \in V^* 
$ 
such that $\ker(\alpha_H) = H$. 
The product $Q:= \prod_{H\in\A} \alpha_{H} $ 
lies in $S$. 
Let $\Omega_{S}  $ be the $S$-module of regular $1$-forms
and
$\Omega_{F} $ be the $F$-vector space of rational $1$-forms on $V$.
Let $I^{*}$ denote the inner product on $V^{*} $ induced from the
inner product $I$ on $V$. 
Then $I^{*} $ naturally
induces
an $S$-bilinear map $I^{*} : \Omega_{F} \times \Omega_{F} 
\rightarrow F$.   Thus we have an $F$-linear isomorphism
\[
I^{*} :
\Omega_{F} \rightarrow \Der_{F} 
\]
 by $[I^{*} (\omega)](f) = I^{*}(\omega, df) $ 
where $\omega\in\Omega_{F}, f\in F $. 
Recall the $S$-module
\begin{multline*}
\Omega(\A, \infty) := 
\{
\omega \in \Omega_{F} 
\mid
Q^{N} \omega
\mbox{\rm ~and~} 
(Q/\alpha_{H})^{N} \omega\wedge d\alpha_{H} \\
\mbox{\rm ~are both regular for any~} H\in\A \mbox{\rm ~and~} 
N \gg 0
\}
\end{multline*} 
of logarithmic {$1$-forms
\cite{Abe-Terao}.} 
We also have the $S$-module 
\begin{align*}
D(\A, -\infty) :&= I^{*} (\Omega(\A, \infty))\\
&=
\{
\theta \in \Der_{F} 
\mid
Q^{N} \theta\in \Der_{S} 
\mbox{\rm ~and~} 
(Q/\alpha_{H})^{N} \theta(\beta) 
\mbox{\rm ~is regular for~} \beta\in V^{*} \\
&\mbox{\rm ~~~~~~~~~~~~~~~~~~~~whenever~} I^{*} (\beta, \alpha_{H}) = 0
\mbox{\rm ~for any~}  
H\in\A \mbox{\rm ~and~} 
N \gg 0
\}
\end{align*} 
of logarithmic derivations \cite{Abe-Terao}.  
Let 
\begin{eqnarray*}
\nabla \colon \Der_{F} \times \Der_{F} &\longrightarrow& \Der_{F} \\
(\theta,\delta)\hspace{0.5cm} &\longmapsto& \nabla_\theta\, \delta
\end{eqnarray*}
be the
{Levi-Civita connection} with respect to $I$.
The derivation $\nabla_\theta \delta \in \Der_F$
 is characterized by the {equality} 
$(\nabla_\theta \delta)(\alpha) = \theta(\delta(\alpha))$ for any 
$\alpha \in
V^*$.

For $\alpha\in V^{*} $ let $S_{(\alpha)} $ denote
the localization of $S$ at the prime ideal $(\alpha)$ of $S$.
For an arbitrary multiplicity $\bfk : \A \rightarrow \Z$,    
define an $S$-submodule $D(\A,\bfk)$ of $D(\A, -\infty)$
 by 
\[
D(\A,\mathbf{k}) := 
\{\ 
\theta \in D(\A, -\infty) 
\mid \theta(\alpha_H) \in \alpha_{H}^{\mathbf{k}(H)}S_{(\alpha_{H})} \ 
\mbox{\rm for any $H \in \A$}\ \}
\]
from \cite{Abe-Terao2}. 
The module $D(\A,\textbf{k})$ was introduced by Ziegler \cite{Ziegler}
when $\im \bfk \subseteq \Z_{\geq 0}$.
Note $D(\A, {\bf 0}) = \Der_{S} $ 
where ${\bf 0}$ is the zero multiplicity.
For each $\mathbf{k} \colon \A \to \Z$,
define 
$
Q^{\mathbf{k}} 
:= \prod\nolimits_{H \in \A} \alpha_H^{\mathbf{k}(H)} 
\in F$.
Recall the following generalization of Saito's criterion
\cite{Saito 2}:

\begin{theorem}
\label{T. Abe's criterion 2 -theorem-}
\renewcommand{\theenumi}{\alph{enumi}}
\renewcommand{\labelenumi}{(\theenumi)}
\mbox{\rm (Abe \cite[Theorem 1.4]{Abe})}
Let $\mathbf{k} \colon \A \to \Z$ and 
$\theta_1,\ldots,\theta_\ell \in D(\A,\mathbf{k})$.
Then 
$\theta_1,\ldots,\theta_\ell$ form 
an $S$-basis for $D(\A,\mathbf{k})$
if and only if
$
\det[\theta_j(x_i)] \doteq
Q^{\mathbf{k}}.$
Here
$\doteq$ implies the equality up to a 
non-zero
constant multiple.
\end{theorem}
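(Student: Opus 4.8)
The plan is to reproduce, for an arbitrary multiplicity, the standard argument behind Saito's criterion, doing the decisive computation locally at the height-one primes of $S$. The first ingredient is a divisibility lemma: for \emph{any} $\theta_1,\dots,\theta_\ell\in D(\A,\bfk)$, the element $\det[\theta_j(x_i)]/Q^{\bfk}$ lies in $S$. Since each $\theta_j$ belongs to $D(\A,-\infty)$, its coordinates $\theta_j(x_i)$ lie in $S[1/Q]$, so $\det[\theta_j(x_i)]$ can have poles only along the hyperplanes of $\A$; it therefore suffices to bound its order along each $H\in\A$. Fixing $H$, I would choose a basis $y_1=\alpha_H,\,y_2,\dots,y_\ell$ of $V^{*}$ with $I^{*}(y_i,\alpha_H)=0$ for $i\geq 2$, which is possible because $I^{*}$ is non-degenerate; replacing the $x_i$ by the $y_i$ multiplies the determinant by a nonzero constant. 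In the matrix $[\theta_j(y_i)]$, every entry of the first row lies in $\alpha_H^{\bfk(H)}S_{(\alpha_H)}$ by the definition of $D(\A,\bfk)$, while every entry of the remaining rows is regular along $H$ by the part of the definition of $D(\A,-\infty)$ concerning forms perpendicular to $\alpha_H$. Expanding the determinant gives $\mathrm{ord}_H\det[\theta_j(y_i)]\geq\bfk(H)=\mathrm{ord}_H Q^{\bfk}$, and since this holds for every $H$ while $S$ is normal, $\det[\theta_j(x_i)]/Q^{\bfk}\in S$.

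For sufficiency, assume $\det[\theta_j(x_i)]\doteq Q^{\bfk}$. Then the $\theta_j$ are linearly independent over $F$, so an arbitrary $\delta\in D(\A,\bfk)$ can be written uniquely as $\delta=\sum_j f_j\theta_j$ with $f_j\in F$. Applying these derivations to $x_1,\dots,x_\ell$ and solving by Cramer's rule gives $f_j=\det(M_j)/\det[\theta_j(x_i)]$, where $M_j$ is obtained from $[\theta_j(x_i)]$ by replacing its $j$-th column with $(\delta(x_1),\dots,\delta(x_\ell))$. But $M_j$ is the matrix attached to the tuple $(\theta_1,\dots,\delta,\dots,\theta_\ell)$, with $\delta$ in the $j$-th slot, and all these derivations lie in $D(\A,\bfk)$, so the divisibility lemma gives $\det(M_j)/Q^{\bfk}\in S$; dividing by $\det[\theta_j(x_i)]/Q^{\bfk}$, which is a nonzero constant, shows $f_j\in S$. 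Hence the $\theta_j$ generate $D(\A,\bfk)$ as an $S$-module and, being $F$-independent, are $S$-independent, so they form an $S$-basis.

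For necessity, let $\theta_1,\dots,\theta_\ell$ be an $S$-basis of $D(\A,\bfk)$. The divisibility lemma gives $\det[\theta_j(x_i)]=g\,Q^{\bfk}$ with $g\in S$, and it remains to see that $g$ is a unit, which I would check prime by prime. The point is that $D(\A,\bfk)$ localizes to a free $S_{\mathfrak{p}}$-module of rank $\ell$ at every height-one prime $\mathfrak{p}$: if $\mathfrak{p}\neq(\alpha_H)$ for all $H$ then $D(\A,\bfk)_{\mathfrak{p}}=\Der_{S_{\mathfrak{p}}}$ with basis $\partial_{x_1},\dots,\partial_{x_\ell}$, while if $\mathfrak{p}=(\alpha_H)$ then in the coordinates $y_1=\alpha_H,y_2,\dots,y_\ell$ above, $D(\A,\bfk)_{(\alpha_H)}$ has basis $\alpha_H^{\bfk(H)}\partial_{y_1},\partial_{y_2},\dots,\partial_{y_\ell}$. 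Since $\theta_1,\dots,\theta_\ell$ is also a free basis of $D(\A,\bfk)_{\mathfrak{p}}$ over the discrete valuation ring $S_{\mathfrak{p}}$, the change-of-basis matrix to the displayed local basis has unit determinant, so $\mathrm{ord}_{\mathfrak{p}}\det[\theta_j(x_i)]$ equals the $\mathfrak{p}$-order of the corresponding determinant of the local basis, namely $0$ when $\mathfrak{p}$ is not a hyperplane and $\bfk(H)$ when $\mathfrak{p}=(\alpha_H)$; in both cases this equals $\mathrm{ord}_{\mathfrak{p}}Q^{\bfk}$, forcing $\mathrm{ord}_{\mathfrak{p}}g=0$. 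As $\mathfrak{p}$ was arbitrary, $g$ is a nonzero constant, i.e. $\det[\theta_j(x_i)]\doteq Q^{\bfk}$.

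I expect the divisibility lemma to be the main obstacle, since it requires using separately the two defining conditions — the order-$\bfk(H)$ bound, which holds only along $\alpha_H$, and the regularity, which holds only for forms perpendicular to $\alpha_H$ — and passing to the orthogonal coordinate system adapted to each hyperplane; extra care is also needed so that the argument is unaffected when $\bfk$ takes negative values, in which case $Q^{\bfk}$ is a genuine rational function and both sides acquire poles. Granting the lemma, sufficiency is a routine application of Cramer's rule, and necessity needs only that a free basis of a module stays a basis after localization, the explicit local structure of $D(\A,\bfk)$ at height-one primes, and the normality of $S$ — none of which goes beyond the material recalled in the excerpt.
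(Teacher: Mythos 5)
Your proposal is correct, but note that the paper contains no proof of this statement to compare against: it is quoted as Abe's Theorem 1.4 from \cite{Abe}, so what you have written is a self-contained substitute, and it follows the natural route (the classical Saito criterion argument carried out locally at height-one primes so that negative values of $\bfk$ cause no trouble), which is also the spirit of Abe's generalization. Your divisibility lemma is argued correctly: after passing to coordinates $y_1=\alpha_H, y_2,\dots,y_\ell$ with $I^*(y_i,\alpha_H)=0$ for $i\geq 2$, the order bound on the first row comes from the condition $\theta(\alpha_H)\in\alpha_H^{\bfk(H)}S_{(\alpha_H)}$ and the regularity of the remaining rows from the second defining condition of $D(\A,-\infty)$, and normality of $S$ turns the orderwise bounds into $\det[\theta_j(x_i)]/Q^{\bfk}\in S$; the sufficiency direction via Cramer's rule is then complete. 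The only steps you assert rather than verify are in the necessity direction: that $D(\A,\bfk)_{\mathfrak p}=\Der_{S_{\mathfrak p}}$ when $\mathfrak p\neq(\alpha_H)$ for all $H$, and that $D(\A,\bfk)_{(\alpha_H)}$ is free with basis $\alpha_H^{\bfk(H)}\partial_{y_1},\partial_{y_2},\dots,\partial_{y_\ell}$. Both are true, but the inclusion of these local models into the localizations needs a word: one must exhibit global elements of $D(\A,\bfk)$ specializing to the stated generators, e.g. $\alpha_H^{\bfk(H)}\bigl(\prod_{H'\neq H}\alpha_{H'}^{\max(\bfk(H'),0)}\bigr)\partial_{y_1}$ and $\bigl(\prod_{H'\neq H}\alpha_{H'}^{\max(\bfk(H'),0)}\bigr)\partial_{y_i}$, whose extra factors are units at $(\alpha_H)$; the reverse inclusion is exactly your first-row/other-rows observation. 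You should also record explicitly that $\det[\theta_j(x_i)]\neq 0$ for any $S$-basis (since $Q^{M}\Der_S\subseteq D(\A,\bfk)$ for $M\gg 0$, the module has rank $\ell$, so the $\theta_j$ are $F$-independent); this is what makes the order computation meaningful and forces your $g$ to be a nonzero constant rather than zero. With these routine details supplied, the proof is complete.
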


\begin{definition}
Let $\bfk : \A\rightarrow \Z$ and  
$\zeta \in D(\A, -\infty)^{W} $, where the superscript $W$ stands for the 
$W$-invariant part.
We say that $\zeta$ is {\bf
$\bfk$-universal}  when 
$\zeta$ is homogeneous and
the $S$-linear map
\begin{align*}
\Psi_{\zeta} : \Der&_{S} \longrightarrow D(\A, 2\bfk)\\
&\theta \longmapsto \nabla_{\theta}\, \zeta
\end{align*}
 is bijective.
\end{definition}

\begin{example}
The {\bf Euler derivation} $E$, which is the derivation characterized by
$E(\alpha) = \alpha$ for any $\alpha\in V^{*} $, is ${\bf 0}$-universal
because ${
\Psi_{E} (\delta)} = \nabla_{\delta}\, E = \delta$.    
\end{example}

For an irreducible Coxeter group $W$,
there exist algebraically independent homogeneous polynomials 
$P_{1} , P_{2} , \dots , P_{\ell} $ 
with
$\deg P_{1} < \deg P_{2} \leq \dots \leq P_{\ell-1} <\deg P_{\ell} $ 
by {Chevalley}'s Theorem \cite{Chevalley},
which are called {\bf basic invariants}.
When $D\in \Der_{F} $ {satisfies
}
\[
D(P_{j} ) =
\begin{cases}
0  \,\, {\rm if~}1\leq j < \ell,\\
1  \,\, {\rm if~}j = \ell,
\end{cases}  
\]
we say that $D$ is a {\bf primitive derivation.}
It is unique up to a nonzero constant multiple.
Let $R := S^{W} $ be the
$W$-invariant subring of $S$ and
$$T := \{
f\in R ~|~ D(f) = 0
\}.$$
\begin{theorem}
\label{Tautomorphism} 
\mbox{
\rm
(
\cite[Theorem 3.9 (1)]{Abe-Terao}
\cite[Theorem 4.4]{Abe-Terao2}
)}
(1)
We have a $T$-linear automorphism
\begin{eqnarray*}
\nabla_{D} : D(\A, -\infty)^{W} 
&\longrightarrow
&D(\A, -\infty)^{W},\\
\theta~~~~~
&\longmapsto
&~~~~\nabla_{D} \,\theta
\end{eqnarray*} 
(2)
$
\nabla_{D} (D(\A, 2 \bfk + {\bf 1} )^{W})
=
D(\A, 2\bfk - {\bf 1} )^{W}
$ 
for any multiplicity $\bfk : \A \rightarrow \Z$. 

\end{theorem}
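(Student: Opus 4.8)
The plan is to deduce both parts from the explicit shape of the primitive derivation together with one local computation at each hyperplane. That $\nabla_{D}$ is $T$-linear is formal: from the characterizing identity $(\nabla_{D}\theta)(\alpha)=D(\theta(\alpha))$ and $D(f)=0$ for $f\in T$ one gets $(\nabla_{D}(f\theta))(\alpha)=D(f\,\theta(\alpha))=f\,D(\theta(\alpha))=f\,(\nabla_{D}\theta)(\alpha)$ for every $\alpha\in V^{*}$, and additivity is clear; moreover $\nabla_{D}$ preserves $W$-invariance because $D$ is $W$-invariant and the Levi-Civita connection of the $W$-invariant form $I$ is $W$-equivariant. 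So the real content is: (i) $\nabla_{D}$ maps $D(\A,-\infty)^{W}$ into itself, (ii) it is bijective there, and (iii) the sharp statement $\nabla_{D}(D(\A,2\bfk+\mathbf{1})^{W})=D(\A,2\bfk-\mathbf{1})^{W}$.

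For (i) and the inclusion ``$\subseteq$'' in (iii), I would use that in the basic coordinates $P_{1},\dots,P_{\ell}$ one has $D=\partial/\partial P_{\ell}$, so rewriting $D$ in the $x$-coordinates brings in the inverse of the Jacobian matrix $(\partial P_{j}/\partial x_{i})$, whose determinant is $\doteq Q$ precisely because $\A$ is a Coxeter arrangement (the Jacobian of the basic invariants is the reduced product $\prod_{H\in\A}\alpha_{H}$). Hence $D=Q^{-1}\sum_{i}c_{i}\,\partial/\partial x_{i}$ with $c_{i}\in S$, so $D$ is a regular vector field off $\bigcup_{H}H$ and has at most a simple pole along each $H\in\A$. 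For $\theta\in D(\A,-\infty)^{W}$ this already gives $Q^{N}\nabla_{D}\theta\in\Der_{S}$ for $N\gg 0$, and a local computation at a generic point of each $H$ (coordinates with $\alpha_{H}=x_{1}$ transverse to $H$) shows that the logarithmic condition cutting out $D(\A,-\infty)=I^{*}(\Omega(\A,\infty))$ is preserved, so $\nabla_{D}\theta\in D(\A,-\infty)^{W}$. The same local model yields the level shift: the $x_{1}$-direction part of $D$ contributes a factor $\alpha_{H}^{-1}$ and acts on $\theta(\alpha_{H})=\theta(x_{1})$ by differentiation, so $\theta(\alpha_{H})\in\alpha_{H}^{2k_{H}+1}S_{(\alpha_{H})}$ forces $(\nabla_{D}\theta)(\alpha_{H})=D(\theta(\alpha_{H}))\in\alpha_{H}^{2k_{H}-1}S_{(\alpha_{H})}$, i.e.\ $\nabla_{D}(D(\A,2\bfk+\mathbf{1})^{W})\subseteq D(\A,2\bfk-\mathbf{1})^{W}$. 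Injectivity follows because $\nabla_{D}\theta=0$ forces every $\theta(\alpha)$ to be $D$-constant, which combined with the pole bounds and $W$-invariance (using that $W$ acts irreducibly on $V$) forces $\theta=0$.

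What remains is surjectivity, and here is where the real work lies. I would first dispose of the constant multiplicities $\bfk=m\mathbf{1}$: the modules $D(\A,(2m\pm1)\mathbf{1})^{W}$ are free over $R=S^{W}$ of rank $\ell$, the Hilbert series of $D(\A,(2m+1)\mathbf{1})^{W}$ is $t^{h}$ times that of $D(\A,(2m-1)\mathbf{1})^{W}$ (by Terao's exponent formula for constant multi-Coxeter arrangements, $h=\deg P_{\ell}$), and $\nabla_{D}$ is graded of degree $-h$; so the injection $\nabla_{D}\colon D(\A,(2m+1)\mathbf{1})^{W}\to D(\A,(2m-1)\mathbf{1})^{W}$ matches graded dimensions and is therefore onto, for every $m$. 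This makes $\nabla_{D}$ a $T$-automorphism of $D(\A,-\infty)^{W}=\bigcup_{m}D(\A,(2m+1)\mathbf{1})^{W}$, so $\nabla_{D}^{-1}$ is defined globally. To finish for arbitrary $\bfk$, I would argue hyperplane by hyperplane: membership in $D(\A,2\bfk-\mathbf{1})$ is a family of independent conditions, one per $H$ and involving only the single value $k_{H}$, while in the local model the operator $g\mapsto D(g)$ maps $\alpha_{H}^{2k_{H}+1}S_{(\alpha_{H})}$ \emph{onto} $\alpha_{H}^{2k_{H}-1}S_{(\alpha_{H})}$ by integration in the $\alpha_{H}$-direction; hence for $\eta\in D(\A,2\bfk-\mathbf{1})^{W}$ the globally defined $\nabla_{D}^{-1}\eta$, already in $D(\A,-\infty)^{W}$, automatically satisfies $(\nabla_{D}^{-1}\eta)(\alpha_{H})\in\alpha_{H}^{2k_{H}+1}S_{(\alpha_{H})}$ at every $H$ and so lies in $D(\A,2\bfk+\mathbf{1})^{W}$. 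The hard part is exactly this last reconciliation---that a derivation which is locally correct at each $H$ and globally in $D(\A,-\infty)^{W}$ is the $\nabla_{D}$-image of something globally in $D(\A,2\bfk+\mathbf{1})^{W}$---and, at a deeper level, the input it relies on: the freeness and the exponent formula in the constant cases, which themselves rest on K.~Saito's theory of the primitive derivation.
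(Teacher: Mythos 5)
First, note that the paper itself contains no proof of Theorem \ref{Tautomorphism}: it is imported verbatim from \cite[Theorem 3.9 (1)]{Abe-Terao} and \cite[Theorem 4.4]{Abe-Terao2}, whose proofs run through K.~Saito's flat structure and the primitive filtration of invariant derivations/forms. Measured against that, your formal steps ($T$-linearity, $W$-equivariance) are fine, and the two inclusions $\nabla_D(D(\A,-\infty)^W)\subseteq D(\A,-\infty)^W$ and $\nabla_D(D(\A,2\bfk+{\bf 1})^W)\subseteq D(\A,2\bfk-{\bf 1})^W$ really are local computations --- though even there your sketch omits the one point that makes them work: for $\beta$ with $I^*(\beta,\alpha_H)=0$ and $\theta$ invariant, it is the $s_H$-anti-invariance of $(QD)(\theta(\beta))$ (hence its divisibility by $\alpha_H$) that cancels the simple pole of $D$ along $H$; without invoking invariance the second defining condition of $D(\A,-\infty)$ is not visibly preserved.

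The genuine gaps are exactly the bijectivity claims. For injectivity, ``$\nabla_D\theta=0$ forces the coefficients to be $D$-constant, hence $\theta=0$ by irreducibility'' is not a proof: $\ker(D|_F)$ is a large subfield (containing $\R(P_1,\dots,P_{\ell-1})$ and its relative algebraic closure in $F$), and the coefficients $\theta(x_i)$ of an invariant derivation are not themselves $W$-invariant (e.g.\ $E$), so the fixed-vector argument does not apply; excluding such derivations from $D(\A,-\infty)^W$ is part of the structure theory being proved. This also undercuts your constant-multiplicity step, since the Hilbert-series matching only upgrades an \emph{injective} graded map to a bijection. For part (2) with arbitrary, possibly non-constant $\bfk$ --- the real content of \cite[Theorem 4.4]{Abe-Terao2} --- the proposal breaks at the final step: from $D(\theta(\alpha_H))=\eta(\alpha_H)\in\alpha_H^{2\bfk(H)-1}S_{(\alpha_H)}$ one cannot conclude $\theta(\alpha_H)\in\alpha_H^{2\bfk(H)+1}S_{(\alpha_H)}$. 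Even granting that $D$ maps $\alpha_H^{2\bfk(H)+1}S_{(\alpha_H)}$ onto $\alpha_H^{2\bfk(H)-1}S_{(\alpha_H)}$ ``by integration'', surjectivity of the local operator says nothing about the particular preimage $\theta=\nabla_D^{-1}\eta$: locally $D$ has an enormous kernel (every element of $T$, e.g.\ any polynomial in $P_1,\dots,P_{\ell-1}$, which vanishes to order $0$ along $H$), so $\theta$ could a priori contain such a summand and fail the required order of vanishing, and no hyperplane-by-hyperplane argument can rule this out. Ruling it out is precisely what the primitive filtration/flat-structure machinery of \cite{Abe-Terao}\cite{Abe-Terao2} accomplishes. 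You flag this reconciliation as ``the hard part'', but flagging it does not close it; as written, the proposal proves only the easy inclusion in (2) and part of (1) modulo the unproved injectivity.
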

Note that $\nabla_{D}^{-1}  $ and $\nabla_{D}^{k} \ (k\in\Z) $
are also $T$-linear automorphisms.

Let $x_{1}, \dots, x_{\ell}$
be a basis for $V^{*} $.
Put $A := [I^{*} (x_{i}, x_{j}) ]_{ij}$
which is a non-singular real symmetric matrix.
For simplicity
let 
$\partial_{x_{j} } $ 
and
$\partial_{P_{j} } $ 
denote 
$\partial/\partial x_{j} $ 
and
$\partial/\partial P_{j} $ 
respectively.
Note that $D = \partial_{P_{\ell} } $. 

\begin{proposition}
Let $k\in\Z$.  
Here $\bfk$ is a constant multiplicity: $\bfk \equiv k$. 
Then
the derivation $\nabla_{D}^{k}E  $ is 
$(-\bfk)$-universal.  
\end{proposition}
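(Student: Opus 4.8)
The plan is to verify the three requirements for $\zeta:=\nabla_D^k E$ to be $(-\mathbf{k})$-universal: that $\zeta\in D(\A,-\infty)^{W}$, that $\zeta$ is homogeneous, and that the map $\Psi_\zeta\colon\Der_S\to D(\A,-2k\,\mathbf{1})$ is bijective. The first two are immediate. Since $E(\alpha)=\alpha$ for every $\alpha\in V^{*}$, we have $E\in\Der_S^{W}\subseteq D(\A,-\infty)^{W}$, and $D(\A,-\infty)^{W}$ is stable under both $\nabla_D$ and $\nabla_D^{-1}$ by Theorem \ref{Tautomorphism}(1); hence $\zeta\in D(\A,-\infty)^{W}$ for all $k\in\Z$. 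Since $\nabla_D=\nabla_{\partial_{P_{\ell}}}$ is homogeneous of degree $-\deg P_{\ell}$ as an operator on $\Der_F$ and $E$ is homogeneous, $\zeta$ is homogeneous of pdeg $1-k\deg P_{\ell}$.

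For the bijectivity I would induct on $k\in\Z$; the base case $k=0$ is the Example, and I record that $\nabla_D E=D$, as the two sides agree on $V^{*}$. For the inductive step put $\zeta':=\nabla_D\zeta$. Since $\nabla$ is the flat, torsion-free Levi-Civita connection, for any $\theta\in\Der_S$ we have the commutation identity
\[
\Psi_{\zeta'}(\theta)=\nabla_{\theta}\nabla_D\zeta=\nabla_D\bigl(\nabla_{\theta}\zeta\bigr)+\nabla_{[\theta,D]}\zeta=\nabla_D\bigl(\Psi_{\zeta}(\theta)\bigr)+\nabla_{[\theta,D]}\zeta ,
\]
and $[\partial_{x_{j}},D]=\nabla_{\partial_{x_{j}}}D$ since $\nabla_D\partial_{x_{j}}=0$. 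Thus $\Psi_{\zeta'}(\partial_{x_{j}})$ equals $\nabla_D$ of $\Psi_{\zeta}(\partial_{x_{j}})$ plus the lower order term $\nabla_{\nabla_{\partial_{x_{j}}}D}\zeta$. Working one $W$-orbit at a time and localizing at each prime $(\alpha_H)$ — where, because $\zeta$ itself is $W$-invariant, the pole of $\zeta$ along $H$ is controlled and $\nabla_D$ shifts the relevant vanishing order by the amount recorded in Theorem \ref{Tautomorphism}(2) — one checks that the sum $\Psi_{\zeta'}(\partial_{x_{j}})$ lies in $D(\A,-2(k+1)\,\mathbf{1})$, and symmetrically for $\nabla_D^{-1}$ going the other way.

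It then remains to apply Saito's criterion (Theorem \ref{T. Abe's criterion 2 -theorem-}): since $\partial_{x_{1}},\dots,\partial_{x_{\ell}}$ is an $S$-basis of $\Der_S$ and the images $\Psi_{\zeta}(\partial_{x_{j}})$ lie in $D(\A,-2k\,\mathbf{1})$, the map $\Psi_{\zeta}$ is an isomorphism exactly when $\det[\,\Psi_{\zeta}(\partial_{x_{j}})(x_{i})\,]\doteq Q^{-2k}$. Because $\nabla$ is tensorial in its first slot, replacing $x_{1},\dots,x_{\ell}$ by the basic invariants $P_{1},\dots,P_{\ell}$ introduces only Jacobian factors which cancel, so this determinant agrees up to sign with $\det[(\nabla_{\partial_{P_{j}}}\zeta)(P_{i})]$; in these (or in K. Saito's flat) coordinates the flat structure underlying Theorem \ref{Tautomorphism} shows that a single application of $\nabla_D$ multiplies this determinant by $Q^{-2}$ up to a nonzero constant, which closes the induction. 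I expect the genuine obstacle to be precisely this last step: controlling the Hessian-type correction terms that arise when $\nabla_D$ is commuted past $\nabla_{\partial_{x_{j}}}$, so that the determinant transforms by the clean factor $Q^{-2}$ — this is where the flatness of the Coxeter discriminant does the real work. One may instead bypass it by identifying the derivations $\Psi_{\zeta}(\partial_{x_{j}})$ with Terao's explicit basis of $D(\A,c\,\mathbf{1})$ for even $c$ constructed in \cite{Terao 2}.
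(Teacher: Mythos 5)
Your preliminary reductions are correct: $\zeta=\nabla_D^{k}E$ is homogeneous, its $W$-invariance follows from $E\in\Der_S^{W}$ together with Theorem \ref{Tautomorphism}~(1), and the identities $\nabla_\theta\nabla_D-\nabla_D\nabla_\theta=\nabla_{[\theta,D]}$ and $[\partial_{x_j},D]=\nabla_{\partial_{x_j}}D$ do hold for this flat connection. The problem is that the heart of the proposition --- bijectivity of $\Psi_\zeta$, i.e.\ that the derivations $\nabla_{\partial_{x_j}}\nabla_D^{k}E$ lie in $D(\A,-2k)$ and have determinant $\doteq Q^{-2k}$ --- is never actually established. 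In your inductive step the containment is only asserted (``one checks\dots''), with an appeal to Theorem \ref{Tautomorphism}~(2) that does not apply: that statement concerns $W$-invariant derivations and the odd constant multiplicities $2\bfk\pm{\bf 1}$, whereas $\nabla_{\partial_{x_j}}\zeta$ is not $W$-invariant and $-2k$ is even. Likewise, the claim that one application of $\nabla_D$ multiplies the determinant by $Q^{-2}$ up to a nonzero constant is equivalent to saying that the new images again form a basis of the next module, which is precisely what the induction is supposed to prove; invoking it is circular, and the commutator corrections $\nabla_{[\partial_{x_j},D]}\zeta$ that you would have to control are exactly the hard part. You concede this yourself, so the proposal stops short of a proof.

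The paper closes this gap by citation rather than by a fresh induction: for $k\leq 0$ the statement is Yoshinaga's theorem \cite{Yoshinaga}, and for $k>0$ it identifies $[\nabla_{\partial_{x_1}}\nabla_D^{k}E,\dots,\nabla_{\partial_{x_\ell}}\nabla_D^{k}E]$ with the known basis $\eta_1^{(-2k)},\dots,\eta_\ell^{(-2k)}$ of $D(\A,-2k)$ of \cite[Definition 3.1]{Abe-Terao}, via the second equality of \cite[Proposition 4.3]{Abe-Terao}, up to the constant invertible matrix $A^{-1}$. Your closing suggestion to fall back on the explicit bases of \cite{Terao 2} points in this direction, but those bases exist for nonnegative constant multiplicities, i.e.\ the case $k\leq 0$ already covered by Yoshinaga; for $k>0$ the multiplicity $-2k$ is negative and one needs the Abe--Terao bases $\eta_i^{(-2k)}$, which is exactly what the paper uses. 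So either quote these results, as the paper does, or supply the genuinely nontrivial determinant computation you flagged; as written, the argument has a gap at its central step.
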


\begin{proof}
When $k\leq 0$, the result was first proved by Yoshinaga in
\cite{Yoshinaga}.
Assume $k > 0$.
Recall a basis
$\eta_{1}^{(-2k)},
\dots,
\eta_{\ell}^{(-2k)}
$ 
for $D(\A, -2k)$
introduced in \cite[Definition 3.1]{Abe-Terao}. 
Then we have
\[
[
\nabla_{\partial_{x_{1} }} \nabla_{D}^{k} E,
\dots,
\nabla_{\partial_{x_{\ell}}} \nabla_{D}^{k} E
]
 =
[
\eta_{1}^{(-2k)},
\dots,
\eta_{\ell}^{(-2k)}
]
A^{-1}, 
\]
which is the second equality of
\cite[Proposition 4.3]{Abe-Terao}
(in the differential-form version).
$\square$ 
\end{proof}

\begin{proposition}
\label{123} 
Let $\zeta\in D(\A, -\infty)^{W} $
be $\bfk$-universal.  Then

(1)
the $S$-linear map
\begin{eqnarray*}
\Psi_\zeta \colon 
D(\A, {\bf -1}) &\longrightarrow& D(\A, 2\bfk{\bf -1})\\
\theta \hspace{5mm} &\longmapsto& \hspace{5mm} \nabla_\theta \, \zeta
\end{eqnarray*}
is bijective,

(2)
$\zeta\in D(\A, 2\bfk + {\bf 1})^{W} $,
and

(3)
$\alpha_{H}^{-2\bfk(H)-1} \zeta(\alpha_{H} ) $ is a unit in 
$S_{(\alpha_{H} )} $ for any $H\in\A$.     
\end{proposition}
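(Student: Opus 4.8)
The plan is to leverage the defining bijectivity of $\Psi_\zeta$ on $\Der_S$ together with the automorphism $\nabla_D$ from Theorem \ref{Tautomorphism}. For part (1), I would first observe that $\nabla_D$ restricts to an isomorphism $D(\A,{\bf 1})^W \to D(\A,{\bf -1})^W$ and, more usefully, that $D(\A,{\bf -1})$ is generated over $S$ by $\Der_S$ together with $\nabla_D^{-1}E$-type corrections; the cleaner route is to note that for $\theta\in\Der_S$ the Leibniz-type identity $\nabla_{f\theta}\zeta = f\nabla_\theta\zeta + \theta(f)$-free terms does not literally hold, so instead I would factor $\Psi_\zeta$ on $D(\A,{\bf -1})$ through the known bijection on $\Der_S$. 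Concretely, since $D = \partial_{P_\ell}$ and $D(\A,{\bf -1})^W$ relates to $\Der_S$ via $\nabla_D$, I would compute $\Psi_\zeta(\nabla_D\theta)$ in terms of $\nabla_D\Psi_\zeta(\theta)$ using that the Levi-Civita connection is flat and torsion-free (so $\nabla_D\nabla_\theta\zeta - \nabla_\theta\nabla_D\zeta = \nabla_{[D,\theta]}\zeta$), reducing the bijectivity on $D(\A,{\bf -1})$ to the already-established bijectivity on $\Der_S = D(\A,{\bf 0})$ after one application of $\nabla_D$, invoking Theorem \ref{Tautomorphism}(2) with the appropriate shift.

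For part (2), I would argue locally at each $H\in\A$ using the localization $S_{(\alpha_H)}$. Since $\zeta\in D(\A,-\infty)^W$ is $W$-invariant and the reflection $s_H\in W$ fixes $H$, the value $\zeta(\alpha_H)$ must transform as $s_H$ acts on $\alpha_H$ (by $-1$), which forces $\zeta(\alpha_H)$ to be divisible by an odd power of $\alpha_H$ in $S_{(\alpha_H)}$ — this is the standard parity argument for invariant logarithmic derivations. To pin down the exact exponent $2\bfk(H)+1$, I would use that $\zeta$ is $\bfk$-universal, hence $\Psi_\zeta(\Der_S) = D(\A,2\bfk)$: evaluating $\nabla_{\partial_{x_i}}\zeta$ at $\alpha_H$ gives $\partial_{x_i}(\zeta(\alpha_H))$, and surjectivity onto $D(\A,2\bfk)$ together with the definition of the latter module forces $\zeta(\alpha_H)$ to lie in $\alpha_H^{2\bfk(H)}S_{(\alpha_H)}$ but combined with the parity constraint (which rules out even exponents for $W$-invariant $\zeta$) yields $\zeta(\alpha_H)\in\alpha_H^{2\bfk(H)+1}S_{(\alpha_H)}$, i.e. $\zeta\in D(\A,2\bfk+{\bf 1})^W$.

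Part (3) is then the sharp form of the same computation: I need that the exponent is exactly $2\bfk(H)+1$, not higher. Here I would use bijectivity (injectivity, actually) of $\Psi_\zeta$ on $\Der_S$: if $\alpha_H^{-2\bfk(H)-1}\zeta(\alpha_H)$ vanished at $H$, then $\zeta$ would lie in $D(\A,2\bfk+2\cdot{\bf 1}_H)$-type submodule, and picking $\theta = I^*(d\alpha_H)$ (the derivation dual to $\alpha_H$) one computes $\Psi_\zeta(\theta)(\alpha_H) = \theta(\zeta(\alpha_H))$, whose $\alpha_H$-adic valuation would be too large to land in the image $D(\A,2\bfk)$ as a generator — contradicting surjectivity near $H$. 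More efficiently, the determinant formula: by Theorem \ref{T. Abe's criterion 2 -theorem-} applied to a basis of $\Der_S$ pushed through $\Psi_\zeta$, $\det[(\nabla_{\partial_{x_j}}\zeta)(x_i)] \doteq Q^{2\bfk}$, and the vanishing order of this determinant along $H$ is controlled precisely by the valuation of $\zeta(\alpha_H)$, forcing it to be exactly $2\bfk(H)+1$ after accounting for the one derivative applied. I expect the main obstacle to be part (1): making the reduction from $D(\A,{\bf -1})$ to $\Der_S$ via $\nabla_D$ precise, since one must track how $\Psi_\zeta$ intertwines with $\nabla_D$ and confirm the module shifts in Theorem \ref{Tautomorphism}(2) align correctly (replacing $\bfk$ by $\bfk$ and $\zeta$ being $(2\bfk+{\bf 1})$-logarithmic by part (2)), and to check that no information is lost at the non-$W$-invariant derivations in $D(\A,{\bf -1})$.
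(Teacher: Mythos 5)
Your plan for parts (2) and (3) is essentially sound: the determinant argument you sketch for (3) (if $\zeta(\alpha_H)$ had order $\geq 2\bfk(H)+2$ then every $(\nabla_{\partial_{x_j}}\zeta)(\alpha_H)$ would have order $\geq 2\bfk(H)+1$, contradicting $\det[(\nabla_{\partial_{x_j}}\zeta)(x_i)]\doteq Q^{2\bfk}$) is exactly the paper's proof of (3), and your local parity-plus-derivative argument for (2) is a workable alternative to the paper's route (the paper instead deduces (2) from (1) by applying $\nabla_D^{-1}$ and Theorem \ref{Tautomorphism}). Note only that you need containment $\Psi_\zeta(\Der_S)\subseteq D(\A,2\bfk)$, not surjectivity, for that step.

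The genuine gap is part (1), which is the heart of the proposition, and the reduction you propose cannot work as stated. Theorem \ref{Tautomorphism} gives a $T$-linear automorphism of $D(\A,-\infty)^{W}$ only on the $W$-invariant part, and its statement (2) sends $D(\A,2\bfk+{\bf 1})^{W}$ to $D(\A,2\bfk-{\bf 1})^{W}$: it always shifts an odd constant part by $2$ and never connects $D(\A,{\bf 0})=\Der_S$ with $D(\A,{\bf -1})$, so "one application of $\nabla_D$ invoking Theorem \ref{Tautomorphism}(2)" cannot transfer bijectivity from $\Der_S$ to $D(\A,{\bf -1})$ (parity mismatch). Moreover, for a general, non-invariant $\theta\in D(\A,{\bf -1})$ neither $\nabla_D\theta$ nor $\nabla_D\Psi_\zeta(\theta)$ is controlled by that theorem, and the commutator term $\nabla_{[D,\theta]}\zeta$ you introduce involves a rational derivation $[D,\theta]$ whose image under $\Psi_\zeta$ you have no handle on; there is also a risk of circularity, since the intertwining of $\Psi_\zeta$ with $\nabla_D$ is exactly what Theorem \ref{Dinverseuniversal} later establishes, using Proposition \ref{123}(1) as input. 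What the paper actually does, and what your sketch is missing, is direct: $\partial_{P_1},\dots,\partial_{P_\ell}$ form an $S$-basis of $D(\A,{\bf -1})$; since $Q\partial_{P_j}\in\Der_S$, universality gives $Q\nabla_{\partial_{P_j}}\zeta=\nabla_{Q\partial_{P_j}}\zeta\in D(\A,2\bfk)$, hence the valuation bound $(\nabla_{\partial_{P_j}}\zeta)(\alpha_H)\in\alpha_H^{2\bfk(H)-1}S_{(\alpha_H)}$; an $s_H$-antiinvariance argument on the remaining coordinates shows $\nabla_{\partial_{P_j}}\zeta\in D(\A,-\infty)$, so $\nabla_{\partial_{P_j}}\zeta\in D(\A,2\bfk-{\bf 1})$; finally the chain rule and $\det[\partial P_i/\partial x_j]\doteq Q$ give $\det[(\nabla_{\partial_{P_j}}\zeta)(x_i)]\doteq Q^{2\bfk-{\bf 1}}$, and Theorem \ref{T. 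Abe's criterion 2 -theorem-} shows these derivations are a basis, i.e.\ $\Psi_\zeta$ is bijective on $D(\A,{\bf -1})$. Without some substitute for these steps your part (1) remains unproved.
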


\begin{proof} 
{(1)}
Note
that $\partial_{P_{1} }, \dots, \partial_{P_{\ell} }$ 
form an $S$-basis for $D(\A, {\bf -1})$
\cite[p.823]{Abe-Terao}.
Let $1\leq j\leq \ell$.
Then
\[
Q\nabla_{\partial_{P_{j} } }  \zeta
=
\nabla_{Q\partial_{P_{j} } }  \zeta
\in 
D(\A, 2\bfk)
\]
because $Q\partial_{P_{j} }\in\Der_{S} $.
Thus 
$$
\left(\nabla_{\partial_{P_{j} }  } \zeta\right)
(\alpha_{H} )\in \alpha_{H}^{2\bfk(H) - {1}} S_{(\alpha_{H})} 
\,\,\,\,\,\,(H\in\A). 
$$ 
Pick $H\in\A$ arbitrarily and choose an orthonormal basis
$x_{1} , \dots, x_{\ell} $ 
for $V^{*} $ so that $H=\ker(x_{1} )$.
For $i = 2, \dots, \ell$ define
$
g_{i} := {
(Q/x_{1})^{N}} 
Q (\nabla_{\partial_{P_{j} }} {
\zeta} )(x_{i} )\in S
$
{
for a sufficiently large positive integer $N$. }
 Let $s = s_{H} $ denote the orthogonal reflection through 
$H$.  Then
$s(g_{i} )= - g_{i} $.
Thus $g_{i} \in x_{1} S$   
and
\[
(\nabla_{\partial_{P_{j} } } \zeta)(x_{i} ) = 
{
(Q/x_{1})^{-N}}
 g_{i} /Q \in {
S_{(x_{1})}}.
\]
This implies $\nabla_{\partial_{P_{j} } }\zeta \in D(\A, -\infty)$ 
and thus $\nabla_{\partial_{P_{j} } }  \zeta\in
D(\A, 2\bfk-{\bf 1}).
$  
One has
\begin{align*}
\det\left[
\left(
\nabla_{\partial_{P_{j} } }  \zeta
\right)
(x_{i} )
\right]
&=
\det
\left(
\left[
\left(
\nabla_{\partial_{x_{j} } } \zeta
\right)
(x_{i} )
\right]
\left[
\partial{P_{i} }
/
\partial{x_{j} }
\right]^{-1} 
\right)
\doteq
Q^{-1} \det
\left[
\left(
\nabla_{\partial_{x_{j} } } \zeta
\right)
(x_{i} )
\right]\\
&\doteq
Q^{2\bfk - {\bf 1}} 
\end{align*} 
by the chain rule
$
\partial_{x_{j} }
=
\sum_{s=1}^{\ell}   
\left(\partial{P_{s} }
/
\partial{x_{j} }
\right)
\partial_{P_{s} }$
and
the equality
$\det
\left[
\partial{P_{i} }
/
\partial{x_{j} }
\right]
\doteq
Q.
$ 
Applying Theorem \ref{T. Abe's criterion 2 -theorem-} 
we conclude that
$
\nabla_{\partial_{P_{1} } } \zeta,
\dots,
\nabla_{\partial_{P_{\ell} } } \zeta
$ 
form an $S$-basis for $D(\A, 2\bfk - {\bf 1})$.

{(2)}
By {(1)}, $\nabla_{D}\, \zeta\in D(\A, 2\bfk - {\bf 1})^{W}$.
Thanks to Theorem \ref{Tautomorphism}, we have $\zeta\in D(\A, 2\bfk + {\bf 1})^{W}$.

{(3)}
By {(2)}, 
$\zeta(\alpha_{H} ) 
\in
\alpha_{H}^{2\bfk(H)+1} 
S_{(\alpha_{H} )} $ for any $H\in\A$.     
Assume that
$\alpha_{H}^{-2\bfk(H)-1} \zeta(\alpha_{H} ) 
$
is not a unit in $S_{(\alpha_{H})}$
for some $H\in\A$.
Choose an orthonormal basis
$x_{1}, x_{2}, \dots, x_{\ell}   $ 
for $V^{*} $ so that $H
= \ker (x_{1} )$.  
Then
$
\zeta(x_{1} ) 
\in
x_{1}^{2\bfk(H)+2} 
S_{(x_{1} )}
$.
Thus
$
(\nabla_{\partial_{x_{j} } } 
\zeta)(x_{1} ) 
\in
x_{1}^{2\bfk(H)+1} 
S_{(x_{1} )}
$ for each $j$ with $1\leq j\leq \ell$ 
and 
$
Q^{2 \bfk} 
\doteq
\det 
\left[
(\nabla_{{
\partial_{x_{j} }}} 
\zeta)(x_{i} )
\right]
\in
x_{1}^{2\bfk(H)+1} 
S_{(x_{1} )},
$ 
which is a contradiction.
$\square$ 
\end{proof}

\begin{proposition}
\label{psiuniversal}
\mbox{\rm (cf. \cite[Theorem 10]{Abe-Yoshinaga}\cite[Theorem 2.1]{Abe})}
If $\zeta\in D(\A, -\infty)^{W}$ is $\bfk$-universal
and
 ${\bf m} : \A \rightarrow \{-1, 0, 1\}$ is a multiplicity,
then the $S$-linear map
\begin{eqnarray*}
\Psi_\zeta \colon 
D(\A, {\bf m}) &\longrightarrow& D(\A, 2\bfk+{\bf m})\\
\theta \hspace{3mm} &\longmapsto& \hspace{4.5mm} \nabla_\theta \,\zeta
\end{eqnarray*}
is bijective.   
\end{proposition}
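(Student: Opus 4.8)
The plan is to reduce the general case to the two cases already established --- $\mathbf{m}\equiv\mathbf{0}$, which is the definition of $\bfk$-universality, and $\mathbf{m}\equiv\mathbf{-1}$, which is Proposition~\ref{123}(1) --- and then to improve the order of vanishing along each hyperplane by a local computation. Since ${\bf m}$ takes values in $\{-1,0,1\}$ we have ${\bf -1}\le{\bf m}$ pointwise, hence $S$-module inclusions $D(\A,{\bf m})\subseteq D(\A,{\bf -1})$ and $D(\A,2\bfk+{\bf m})\subseteq D(\A,2\bfk-{\bf 1})$. Injectivity is then immediate: the map $\theta\mapsto\nabla_\theta\zeta$ is $F$-linear (clear from the identity $(\nabla_\theta\zeta)(\alpha)=\theta(\zeta(\alpha))$) and carries the $F$-basis $\partial_{x_1},\dots,\partial_{x_\ell}$ of $\Der_{F}$ to $\nabla_{\partial_{x_1}}\zeta,\dots,\nabla_{\partial_{x_\ell}}\zeta$, which form an $S$-basis of $D(\A,2\bfk)$ by $\bfk$-universality and so are $F$-linearly independent; thus $\Psi_\zeta$ is injective on all of $\Der_{F}$, in particular on $D(\A,{\bf m})$.

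For well-definedness I would take $\theta\in D(\A,{\bf m})\subseteq D(\A,{\bf -1})$, so that Proposition~\ref{123}(1) already gives $\nabla_\theta\zeta\in D(\A,2\bfk-{\bf 1})\subseteq D(\A,-\infty)$; it then remains to sharpen the order along each $H\in\A$ from $2\bfk(H)-1$ to $2\bfk(H)+{\bf m}(H)$. Fix $H$ and choose an orthonormal basis $x_1,\dots,x_\ell$ of $V^{*}$ with $H=\ker x_1$, so that $\alpha_H$ may be taken equal to $x_1$. Proposition~\ref{123}(3) lets me write $\zeta(x_1)=x_1^{2\bfk(H)+1}u$ with $u$ a unit in $S_{(x_1)}$; from $\theta\in D(\A,{\bf m})$ I have $\theta(x_1)\in x_1^{{\bf m}(H)}S_{(x_1)}$, and from $\theta\in D(\A,-\infty)$ together with the fact that $Q/x_1$ is a unit in $S_{(x_1)}$ I have $\theta(x_i)\in S_{(x_1)}$ for $i\ge 2$, whence $\theta(u)\in x_1^{\min({\bf m}(H),0)}S_{(x_1)}$. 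Writing $\theta(\zeta(x_1))$ out by the chain rule as $(2\bfk(H)+1)\,x_1^{2\bfk(H)}u\,\theta(x_1)+x_1^{2\bfk(H)+1}\theta(u)$ and estimating the $x_1$-order of the two terms case-by-case over ${\bf m}(H)\in\{-1,0,1\}$ shows $(\nabla_\theta\zeta)(x_1)\in x_1^{2\bfk(H)+{\bf m}(H)}S_{(x_1)}$, so $\nabla_\theta\zeta\in D(\A,2\bfk+{\bf m})$.

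For surjectivity I would start from $\delta\in D(\A,2\bfk+{\bf m})\subseteq D(\A,2\bfk-{\bf 1})$ and use Proposition~\ref{123}(1) to obtain the unique $\theta\in D(\A,{\bf -1})$ with $\nabla_\theta\zeta=\delta$; since $D(\A,{\bf -1})\subseteq D(\A,-\infty)$, it suffices to prove $\theta(\alpha_H)\in\alpha_H^{{\bf m}(H)}S_{(\alpha_H)}$ for each $H$. Fix $H$, pass to orthonormal coordinates with $H=\ker x_1$, and put $J:=[\partial_{x_j}(\zeta(x_i))]_{ij}$, so that $\nabla_\theta\zeta=\delta$ reads $J\,(\theta(x_j))_{j}=(\delta(x_i))_{i}$, while Theorem~\ref{T. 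Abe's criterion 2 -theorem-} applied to the basis $\nabla_{\partial_{x_1}}\zeta,\dots,\nabla_{\partial_{x_\ell}}\zeta$ of $D(\A,2\bfk)$ gives $\det J\doteq Q^{2\bfk}$, which has $x_1$-order exactly $2\bfk(H)$. The key step is to pin down the block shape of $J$ at $H$: using $\zeta(x_1)=x_1^{2\bfk(H)+1}(\text{unit})$ and the $W$-invariance of $\zeta$ (which makes $\zeta(x_1)$ odd and each $\zeta(x_i)$, $i\ge 2$, even in $x_1$), the $(1,1)$-entry of $J$ has $x_1$-order exactly $2\bfk(H)$, the remaining entries of the first row have order $\ge 2\bfk(H)+1$, the remaining entries of the first column have order $\ge 1$, and the lower-right $(\ell-1)\times(\ell-1)$ block has entries of order $\ge 0$; comparing with $\det J$ forces that block to have determinant a unit in $S_{(x_1)}$. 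Cramer's rule then writes $\theta(x_1)=\det J^{(1)}/\det J$, where $J^{(1)}$ is $J$ with its first column replaced by $(\delta(x_i))_i$; expanding $\det J^{(1)}$ along that column and using that $\delta(x_1)=\delta(\alpha_H)$ has order $\ge 2\bfk(H)+{\bf m}(H)$, that $\delta(x_i)\in S_{(x_1)}$ for $i\ge 2$ since $\delta\in D(\A,-\infty)$, and the order bounds on the minors of $J$, gives $\mathrm{ord}_{x_1}\det J^{(1)}\ge 2\bfk(H)+{\bf m}(H)$, so $\theta(\alpha_H)\in\alpha_H^{{\bf m}(H)}S_{(\alpha_H)}$. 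Hence $\theta\in D(\A,{\bf m})$ and $\Psi_\zeta$ is onto.

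The hard part will be the block description of $J$ at each $H$, above all the claim that the lower-right $(\ell-1)\times(\ell-1)$ block has unit determinant --- this is exactly what keeps the Cramer quotient from dropping in order --- and it relies precisely on the sharp form of Proposition~\ref{123}(3) and on the $W$-invariance of $\zeta$. Once that is secured, the surrounding order estimates are routine, precisely because ${\bf m}$ ranges only over $\{-1,0,1\}$.
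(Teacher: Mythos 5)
Your proposal is correct, and its first half (injectivity via the $F$-linear independence of the $\nabla_{\partial_{x_j}}\zeta$, and the containment $\Psi_\zeta(D(\A,{\bf m}))\subseteq D(\A,2\bfk+{\bf m})$ by a Leibniz computation at each $H$ using Proposition \ref{123} (3)) is essentially the paper's argument. Where you genuinely diverge is surjectivity. The paper never needs your matrix $J$, Saito's criterion, parity considerations, or Cramer's rule: it regroups the same Leibniz expansion as $\theta(x_1^{2k+1}g)=x_1^{2k}\theta(x_1)U+x_1^{2k+1}C$ with $U=x_1(\partial g/\partial x_1)+(2k+1)g$ a unit of $S_{(x_1)}$ and $C=\sum_{j\ge 2}\theta(x_j)(\partial g/\partial x_j)\in S_{(x_1)}$ \emph{not involving} $\theta(x_1)$, so that dividing by $x_1^{2k+m}$ yields the two-way equivalence $x_1^{-2k-m}(\Psi_\zeta\theta)(x_1)\in S_{(x_1)}\Longleftrightarrow x_1^{-m}\theta(x_1)\in S_{(x_1)}$ for any $\theta\in D(\A,-{\bf 1})$; applied to the unique preimage furnished by Proposition \ref{123} (1), this gives containment and surjectivity in one stroke. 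Your route works: I checked that $\det J$ has $x_1$-order exactly $2\bfk(H)$, that the first-row entries $J_{1j}$, $j\ge 2$, have order at least $2\bfk(H)+1$, hence the minors $M_{i1}$, $i\ge 2$, have order at least $2\bfk(H)+1$, and the Cramer estimate then gives $\mathrm{ord}_{x_1}\theta(x_1)\ge {\bf m}(H)$ using ${\bf m}(H)\le 1$. But note that the step you single out as the hard part, the unit determinant of the lower-right block, is not actually needed for your own conclusion (order $\ge 0$ for that block's minors suffices), and likewise the parity analysis from $W$-invariance can be dropped, since $\zeta(x_i)\in S_{(x_1)}$ for $i\ge 2$ already gives all the bounds you use. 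In short, your argument is correct but heavier; the paper's regrouping trick buys reversibility of the single local computation, which is exactly what replaces the determinant bookkeeping, while your version makes the Saito-criterion structure of $[\,(\nabla_{\partial_{x_j}}\zeta)(x_i)\,]$ explicit at the cost of extra (and partly superfluous) machinery.
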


\begin{proof}
Note that 
$D(\A, {\bf m})
\subseteq D(\A, -{\bf 1})$
and
$D(\A, \bfk+{\bf m})
\subseteq D(\A, \bfk-{\bf 1})$.
By Proposition \ref{123} {(1)},
the restriction of 
\begin{eqnarray*}
\Psi_\zeta \colon 
D(\A, {\bf -1}) &\longrightarrow& D(\A, 2\bfk{\bf -1})
\end{eqnarray*}
to $D(\A, {\bf m})$ is injective.
Thus it is enough to prove
$\Psi_\zeta(D(\A, {\bf m})) = D(\A, 2\bfk+{\bf m}).$
Let 
$\theta\in D(\A, -{\bf 1})$.
Pick $H\in\A$ arbitrarily and fix it.
Choose an orthonormal basis
$x_{1} , x_{2} , \dots , x_{\ell} $ 
with $H = \ker (x_{1} )$.
Let 
$k := {\bf k} (H)$ 
and 
$m := {\bf m} (H)$.
Then, by Proposition \ref{123} (3),
$g:= x_{1}^{-2k-1}  \zeta(x_{1} )$ is a unit in $S_{(x_{1} )}$.   
Compute
\begin{align*} 
(\Psi_{\zeta} (\theta))(x_{1})
&=
(\nabla_{\theta} \, \zeta)(x_{1})
=
\theta(\zeta(x_{1}))
=
\theta(x_{1}^{2k+1} g)
=
x_{1}^{2k+1} \theta(g) +
(2k+1) x_{1}^{2k} \theta(x_{1}) g\\
&=
x_{1}^{2k+1} 
\sum_{j=1}^{\ell}  \theta(x_{j})(\partial g/\partial x_{j}) 
+
(2k+1) x_{1}^{2k} \theta(x_{1}) g\\
&=
x_{1}^{2k} \theta(x_{1})
\left\{
x_{1} (\partial g/\partial x_{1})
+(2k+1)g
\right\}
+
x_{1}^{2k+1} 
\sum_{j=2}^{\ell}  \theta(x_{j})(\partial g/\partial x_{j}) \\
&
=
x_{1}^{2k} \theta(x_{1})
U+
x_{1}^{2k+1} 
C,
\end{align*} 
where $U := x_{1} (\partial g/\partial x_{1})
+(2k+1)g
$ 
is a unit in $S_{(x_{1} )} $
and $C :=
\sum_{j=2}^{\ell}  \theta(x_{j})(\partial g/\partial x_{j}).
$
Dividing the both sides by $x_{1}^{2k+m}$, we get
\[
x_{1}^{-2k-m}  
(\Psi_{\zeta} (\theta))(x_{1})
=
x_{1}^{-m} \theta(x_{1})
U+
x_{1}^{1-m} 
C.
\]
Note that $  \partial g/\partial x_{j}
\in
S_{(x_{1} )}$
and
$ 
\theta(x_{j})  \in 
S_{(x_{1} )} 
\ (j\geq 2)$ because $\theta\in D(\A, -\infty)$.
So one has 
$
C \in S_{(x_{1} )} $
and
$x_{1}^{1-m} 
C \in S_{(x_{1} )}$
for $m\in\{\pm 1, 0\}$.
Thus we conclude that 
$$
x_{1}^{-2k-m} (\Psi_{\zeta} (\theta))(x_{1}) 
\in S_{(x_{1} )} 
\Longleftrightarrow
x_{1}^{-m} \theta(x_{1}) 
\in S_{(x_{1} )}. $$ 
This implies that
$$
\Psi_{\zeta} (\theta)
\in
D(\A, 2{\bf k}+{\bf m})
\Longleftrightarrow
\theta
\in 
D(\A, {\bf m})$$
because $H\in\A$ was arbitrarily chosen. 
This completes the proof.
$\square$ 
\end{proof}

The following is the main result in this section.

\begin{theorem}\label{Dinverseuniversal}
Let $\mathbf{k} \colon \A \to \Z$ be a multiplicity of $\A$.
Let $\zeta \in D(\A,-\infty)^{W}$ be $\bfk$-universal.
Then
$
\nabla_D^{-1} \zeta
$
is $(\bfk+{\bf 1})$-universal.
\end{theorem}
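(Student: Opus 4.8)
The plan is to set $\zeta' := \nabla_D^{-1}\zeta$ and verify the three requirements in the definition of $(\bfk+{\bf 1})$-universality: that $\zeta' \in D(\A,-\infty)^W$, that $\zeta'$ is homogeneous, and that the map $\Psi_{\zeta'}\colon \Der_S \to D(\A, 2\bfk+{\bf 2})$, $\theta\mapsto \nabla_\theta\,\zeta'$, is bijective. The first point is immediate from Theorem \ref{Tautomorphism}(1), since $\nabla_D^{-1}$ is a $T$-linear automorphism of $D(\A,-\infty)^W$. Homogeneity is a degree bookkeeping: $\nabla_D$ lowers pdeg by $\deg P_\ell$ (as $D = \partial_{P_\ell}$), so $\nabla_D^{-1}$ raises it by $\deg P_\ell$, and $\zeta'$ is again homogeneous.

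The core of the argument is the bijectivity of $\Psi_{\zeta'}$. First I would identify the target module: by Proposition \ref{123}(2) applied to $\zeta$, we have $\zeta \in D(\A, 2\bfk+{\bf 1})^W$, and then Theorem \ref{Tautomorphism}(2) gives $\nabla_D^{-1}\zeta \in D(\A, 2\bfk+{\bf 3})^W = D(\A, 2(\bfk+{\bf 1})+{\bf 1})^W$, which is exactly what Proposition \ref{123}(2) would predict for a $(\bfk+{\bf 1})$-universal derivation; this is a consistency check but also pins down that $\zeta'$ lands in the right place, so $\Psi_{\zeta'}$ maps $\Der_S$ into $D(\A, 2\bfk+{\bf 2})$. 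To get bijectivity, the natural route is to relate $\Psi_{\zeta'}$ to $\Psi_\zeta$ via $\nabla_D$. The key identity I want is that $\nabla_D$ commutes appropriately with $\nabla_\theta$ for $\theta\in\Der_S$; more precisely, since the Levi-Civita connection is torsion-free and flat (it is the trivial connection $(\nabla_\theta\delta)(\alpha) = \theta(\delta(\alpha))$ for linear $\alpha$), one has $\nabla_\theta \nabla_D \zeta' = \nabla_D \nabla_\theta \zeta' + \nabla_{[\theta,D]}\zeta'$ or a similar bracket relation; because $D$ has polynomial-coefficient form and $\theta\in\Der_S$, the correction term is controlled. This should yield that the diagram relating $\Psi_{\zeta'}\colon \Der_S \to D(\A, 2\bfk+{\bf 2})$ and $\Psi_\zeta\colon \Der_S \to D(\A, 2\bfk)$ through the automorphisms $\nabla_D$ commutes up to terms that vanish or are absorbed, forcing $\Psi_{\zeta'}$ to be an isomorphism since $\Psi_\zeta$ is one by hypothesis.

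Alternatively — and this may be cleaner — I would invoke Proposition \ref{psiuniversal}: once we know $\zeta'\in D(\A, 2\bfk+{\bf 3})^W \subseteq D(\A,-\infty)^W$ and that $\Psi_{\zeta'}$ restricted to $D(\A,{\bf 0}) = \Der_S$ behaves like the universal map, it suffices to check bijectivity of $\Psi_{\zeta'}$ on $\Der_S = D(\A,{\bf 0})$ directly via Theorem \ref{T. Abe's criterion 2 -theorem-}: compute $\det[(\nabla_{\partial_{x_j}}\zeta')(x_i)]$ and show it equals $Q^{2\bfk+{\bf 2}}$ up to a nonzero constant. For this I would write $\nabla_{\partial_{x_j}}\zeta' = \nabla_{\partial_{x_j}}\nabla_D^{-1}\zeta$ and use the analogue of the chain-rule computation in the proof of Proposition \ref{123}(1): relating $\nabla_{\partial_{x_j}}$ to $\nabla_{\partial_{P_j}}$ introduces a factor of $\det[\partial P_i/\partial x_j] \doteq Q$, and relating $\nabla_D^{-1}$ to the un-inverted version via Theorem \ref{Tautomorphism}(2) shifts the multiplicity by ${\bf 2}$ overall, producing $Q^{2\bfk+{\bf 2}}$ as required.

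The step I expect to be the main obstacle is establishing the commutation relation between $\nabla_D^{-1}$ and $\nabla_\theta$ precisely enough to transfer bijectivity — the bracket $[\theta, D]$ for $\theta\in\Der_S$ need not lie in $\Der_S$ (since $D = \partial_{P_\ell}$ has coefficients in $F$), so one must argue that the resulting correction derivations still lie in the correct logarithmic module $D(\A, 2\bfk+{\bf 2})$ and do not spoil the isomorphism. Handling this likely requires working locally at each hyperplane $H\in\A$, choosing an orthonormal basis adapted to $H$, and tracking orders of vanishing in $S_{(\alpha_H)}$ — essentially the same local analysis used in the proofs of Propositions \ref{123} and \ref{psiuniversal}, now combined with the defining property $\nabla_D(D(\A, 2\bfk+{\bf 3})^W) = D(\A, 2\bfk+{\bf 1})^W$ to keep the bookkeeping consistent.
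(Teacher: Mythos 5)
Your framing (invariance and homogeneity of $\zeta':=\nabla_D^{-1}\zeta$ are easy; the content is bijectivity of $\Psi_{\zeta'}$ on $\Der_S$) matches the paper, but the core step is left unproved, and both routes you sketch stall exactly where the real difficulty lies. The obstruction is not the bracket $[\theta,D]$ per se but the fact that $\nabla_D^{-1}$ is only $T$-linear, not $S$-linear: it preserves the $W$-invariant parts $D(\A,2\bfk+{\bf 1})^W\to D(\A,2\bfk+{\bf 3})^W$ (Theorem \ref{Tautomorphism}), but it does not carry an $S$-basis to an $S$-basis, so knowing that $\nabla_{\partial_{P_j}}\zeta$ is a basis of $D(\A,2\bfk-{\bf 1})$ says nothing directly about $\nabla_{\partial_{P_j}}\nabla_D^{-1}\zeta=\nabla_D^{-1}\nabla_{\partial_{P_j}}\zeta$. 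Your first route ("terms that vanish or are absorbed") never makes this transfer precise, and your second route asserts that Theorem \ref{Tautomorphism}(2) "shifts the multiplicity by ${\bf 2}$ overall, producing $Q^{2\bfk+{\bf 2}}$"; that determinant identity is exactly the statement to be proved, and membership results of the type \ref{Tautomorphism}(2) cannot yield it. Your suggestion to fix things by a local analysis at each hyperplane is also not how the difficulty is resolved: the issue is global, about how $\nabla_D^{-1}$ interacts with $S$-module structure.

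The missing idea in the paper's proof is invariant-theoretic. One takes the Saito basis $\xi_j:=I^*(dP_j)$ of $D(\A,{\bf 1})$, so that by Proposition \ref{psiuniversal} (with ${\bf m}={\bf 1}$) the derivations $\nabla_{\xi_j}\zeta$ form an $S$-basis of $D(\A,2\bfk+{\bf 1})^{\phantom{W}}$ consisting of $W$-invariants; then $\nabla_D\nabla_{\xi_j}\zeta\in D(\A,2\bfk-{\bf 1})^W$ is expanded as $\sum_i f_{ij}\nabla_{\partial_{P_i}}\zeta$ in the invariant basis coming from Proposition \ref{123}(1), which forces $f_{ij}\in R$ homogeneous of degree $m_i+m_j-h<h$, hence $f_{ij}\in T$ (no $P_\ell$ can occur), and $\det[f_{ij}]\in\R$. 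Because the $f_{ij}$ lie in $T$, they pass through the $T$-linear map $\nabla_D^{-1}$, giving $\nabla_{\xi_j}\zeta=\sum_i f_{ij}\nabla_{\partial_{P_i}}\nabla_D^{-1}\zeta$; since the left-hand sides form a basis, $\det[f_{ij}]\in\R^{\times}$, so $\nabla_{\partial_{P_j}}\nabla_D^{-1}\zeta$ is a basis of $D(\A,2\bfk+{\bf 1})$, and the chain rule together with Theorem \ref{T. Abe's criterion 2 -theorem-} converts this into the required basis $\nabla_{\partial_{x_j}}\nabla_D^{-1}\zeta$ of $D(\A,2\bfk+{\bf 2})$. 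Without this degree-and-$T$-membership argument (or a substitute for it), your proposal does not establish the theorem.
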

\begin{proof}
It is classically known 
\cite{Saito 1}
that 
$\xi_{j} := I^{*} (dP_{j} ) \in D(\A, {\bf 1} )^{W} \
(j = 1,\dots, \ell)$
form an $S$-basis for $D(\A, {\bf 1})$.  
By Proposition \ref{psiuniversal}, 
$\nabla_{\xi_{j}} \zeta \in D(\A, 2\bfk+{\bf 1} )^{W} \
(j = 1,\dots, \ell)$
form an $S$-basis for $D(\A, 2\bfk+{\bf 1})$.  
Since 
$\nabla_{D} \nabla_{\xi_{j}} \zeta \in D(\A, 2\bfk-{\bf 1} )^{W} \
(j = 1,\dots, \ell)$
by Theorem \ref{Tautomorphism},
we can write
\[
\nabla_D \nabla_{\xi_j} \zeta
= \sum\nolimits_{i=1}^\ell f_{ij} \nabla_{\partial_{P_i}}\zeta
\]
with $W$-invariant polynomials $f_{ij} \in R$
because of Proposition \ref{123} (1).
Then $f_{ij}$ is a homogeneous element with degree $m_i+m_j-h < h$,
where $h$ is the Coxeter number,
and  $f_{ij}$ belongs to 
$
T =\{f \in R \mid Df=0\}.
$
Since $m_{i} + m_{\ell+1-i} - h =0$,
$\det [f_{ij} ]\in\R$.  
Apply $\nabla_{D}^{-1} $ to the both sides
to get
\[
\nabla_{\xi_j} \zeta
= \nabla_{D}^{-1}  \sum\nolimits_{i=1}^\ell f_{ij} 
\nabla_{\partial_{P_i}}
\zeta
=\sum\nolimits_{i=1}^\ell f_{ij} \nabla_{\partial_{P_i}}
  \nabla_{D}^{-1} 
 \zeta.
\]
Since
$\nabla_{\xi_{j}} \zeta \in D(\A, 2\bfk+{\bf 1} )^{W} 
\
(j = 1,\dots, \ell)$
form an $S$-basis for $D(\A, 2\bfk+{\bf 1})$,
we have 
$\det [f_{ij} ]\in\R^{\times}$.  
This implies that
$
\nabla_{\partial_{P_j}}
  \nabla_{D}^{-1} 
\zeta
\
(j = 1,\dots, \ell)$
form an $S$-basis for $D(\A, 2\bfk+{\bf 1})$.
Since
$
  \nabla_{D}^{-1} 
\zeta
\in 
D(\A, 2\bfk+{\bf 3} )
$
by
Proposition \ref{123} (2)
and Theorem \ref{Tautomorphism},
we conclude that  
$$
\nabla_{\partial_{x_j}}
  \nabla_{D}^{-1} 
 \zeta 
=
\sum_{i=1}^{\ell} 
(\partial_{x_{j} } P_{i})  
\nabla_{\partial_{P_i}}
  \nabla_{D}^{-1} 
\zeta
\,\,\,
(j = 1,\dots, \ell)$$
form an $S$-basis for $D(\A, 2\bfk+{\bf 2})$
by Theorem \ref{T. Abe's criterion 2 -theorem-}.
$\eop$
\end{proof}

\section{The ordinary cases}
In the rest of this paper
 we assume $\dim V = \ell =2$
and $W = I_{2}(h)$
such that $h\geq 4$ is an even number.
The orbit decomposition 
$\A = \A_{1} \cup \A_{2}$ satisfies
$|\A_{1} |= |\A_{2} |= h/2$.
Recall the equivariant multiplicities
$\bfk = (a_{1} , a_{2} )$,
$a_{1} , a_{2} \in\Z$,  defined by 
\[
\bfk (H)
=
\begin{cases}
a_{1}   \, \, {\rm ~if~} H \in \A_{1},\\ 
a_{2}   \, \, {\rm ~if~} H \in \A_{2}.\\ 
\end{cases}  
\]

Let $x_{1}, x_{2}$ be an orthonormal basis for $V^{*} $.
Suppose that $P_{1} := (x_{1}^{2} + x_{2}^{2})/2$
and $P_{2} $ are basic invariants of $W$. 
 Then
$\deg P_{2} = h$
and
$R = S^{W} = \R[P_{1}, P_{2}]$. 
Let $W_{i} $ be the {
(normal)} subgroup of $W$ generated by
all reflections through $H\in\A_{i} \,\,(i=1,2)$. 
Let $Q_{i} = \prod_{H\in \A_{i} } \alpha_{H} $
and
$R_{i} := S^{W_{i} }\,\,(i=1,2 )$.
Let
$D$ be a primitive derivation corresponding to
the whole group $W$.
Then it is known \cite[(5.1)]{Solomon-Terao} 
that
\[
D \doteq 
\frac{1}{Q} 
\left(
-x_{2} \partial_{x_{1} }  
+
x_{1} \partial_{x_{2} }
\right).  
\]

\begin{lemma}
\label{lemma3.1} 
Define 
$$
D_{1} := Q_{2} D
\doteq
\frac{1}{Q_{1} } 
\left(
-x_{2} \partial_{x_{1} }  
+
x_{1} \partial_{x_{2} }
\right),
\,\,\,\,
D_{2} := Q_{1} D
\doteq
\frac{1}{Q_{2} } 
\left(
-x_{2} \partial_{x_{1} }  
+
x_{1} \partial_{x_{2} }
\right).
$$
Then

(1) 
$R_{1} =\R[P_{1}, Q_{2}]$,
$R_{2} =\R[P_{1}, Q_{1}]$
 and
$R =
\R[P_{1}, Q_{1}^{2}]
=
\R[P_{1}, Q_{2}^{2}]
$,

(2)
$
-x_{2} (\partial Q_{2}/\partial x_{1})
+
x_{1} (\partial Q_{2}/\partial x_{2})
\doteq
Q_{1} 
$ 
and
$
-x_{2} (\partial Q_{1}/\partial x_{1})
+
x_{1} (\partial Q_{1}/\partial x_{2})
\doteq
Q_{2},
$ 

(3) $D_{1} (P_{1} ) = D_{2} (P_{1} ) = 0$,
$D_{1} (Q_{2} ) \in \R^{\times} $
and
$D_{2} (Q_{1} ) \in \R^{\times} $.
\end{lemma}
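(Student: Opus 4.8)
The plan is to trivialize the whole lemma by passing to the complex coordinate $z := x_1 + i x_2$. After a rotation of $V$ we may assume the $x_1$-axis is a reflecting line; then $P_1 = z\bar z/2$ by definition and, classically, $Q\doteq\Im(z^h)$ up to a nonzero real constant. Since $\Im(z^h)\doteq\Im(z^{h/2})\,\Re(z^{h/2})$, and the zero loci of $\Im(z^{h/2})$ and of $\Re(z^{h/2})$ are each $W$-stable and consist of $h/2$ of the $h$ reflecting lines, they are exactly $\A_1$ and $\A_2$; together with the on-the-nose equality $Q=Q_1Q_2$ (valid because $\A=\A_1\sqcup\A_2$) and unique factorization this gives --- after swapping the labels $1,2$ if necessary, which leaves the statement of the lemma unchanged --- $Q_1\doteq\Im(z^{h/2})$ and $Q_2\doteq\Re(z^{h/2})$. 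I will also record the routine identity
\[
\theta_0 \;:=\; -x_2\,\frac{\partial}{\partial x_1}+x_1\,\frac{\partial}{\partial x_2}\;=\;i\Bigl(z\,\frac{\partial}{\partial z}-\bar z\,\frac{\partial}{\partial\bar z}\Bigr),
\]
so that $\theta_0(z^n)=inz^n$, $\theta_0(\bar z^n)=-in\bar z^n$, and $\theta_0(z\bar z)=0$, and I recall $D\doteq\theta_0/Q$ from the formula quoted just before the lemma.

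Granting this dictionary, the displayed formulas $D_1\doteq\theta_0/Q_1$ and $D_2\doteq\theta_0/Q_2$ are immediate from $D\doteq\theta_0/(Q_1Q_2)$. Part (2) then becomes a one-line computation: the two displayed left-hand sides in (2) are exactly $\theta_0(Q_2)$ and $\theta_0(Q_1)$, and $\theta_0(\Re z^{h/2})\doteq\Im z^{h/2}$, $\theta_0(\Im z^{h/2})\doteq\Re z^{h/2}$, the constants being $\pm h/2\ne 0$. Part (3) follows at once: $D(P_1)=0$ because $P_1$ is a basic invariant of degree $2<h$, hence $D_1(P_1)=Q_2\,D(P_1)=0$ and similarly $D_2(P_1)=0$; and $D(Q_2)\doteq\theta_0(Q_2)/(Q_1Q_2)\doteq Q_1/(Q_1Q_2)=Q_2^{-1}$, so $D_1(Q_2)=Q_2\,D(Q_2)\in\R^{\times}$, and symmetrically $D_2(Q_1)\in\R^{\times}$.

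For part (1) I would first record the obvious inclusions and then upgrade them to equalities by comparing Hilbert series. A reflection through a line of $\A_1$ fixes $\Re(z^{h/2})\doteq Q_2$ and sends $\Im(z^{h/2})\doteq Q_1$ to its negative, while a reflection through a line of $\A_2$ does the reverse; hence $Q_2\in S^{W_1}=R_1$, $Q_1\in S^{W_2}=R_2$, and $Q_1^2,Q_2^2\in S^{W}=R$. Next, using (2), $dP_1\wedge dQ_2=\theta_0(Q_2)\,dx_1\wedge dx_2\doteq Q_1\,dx_1\wedge dx_2\ne 0$ and $dP_1\wedge d(Q_1^2)=2Q_1\,\theta_0(Q_1)\,dx_1\wedge dx_2\doteq Q_1Q_2\,dx_1\wedge dx_2\ne 0$, so $\{P_1,Q_2\}$ and $\{P_1,Q_1^2\}$ are each algebraically independent. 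Consequently $\R[P_1,Q_2]\subseteq R_1$ is an inclusion of graded polynomial rings whose generators have the same degrees, namely $2$ and $h/2$ --- here I use that $W_1\cong I_2(h/2)$, so that $R_1$ is free on invariants of degrees $2$ and $h/2$ --- whence the two rings have the same Hilbert series and must coincide. The identical argument gives $R_2=\R[P_1,Q_1]$, and, using $\deg P_2=h$, $R=\R[P_1,Q_1^2]=\R[P_1,Q_2^2]$.

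I do not expect a serious difficulty anywhere in this. The only place calling for a little attention is part (1): one must remember that each $W_i$ is isomorphic to $I_2(h/2)$, so that $R_i$ genuinely has a generator in degree $h/2=\deg Q_{3-i}$, and one should keep the $\doteq$-constants real throughout by phrasing everything via $\Re(z^{h/2})$ and $\Im(z^{h/2})$ rather than $z^{h/2}\pm\bar z^{h/2}$ directly.
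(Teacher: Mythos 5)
Your proof is correct, but it takes a genuinely different route from the paper's. You prove (2) first, by passing to $z=x_1+ix_2$ and identifying (after a harmless rotation and relabelling) $Q_1\doteq\Im(z^{h/2})$, $Q_2\doteq\Re(z^{h/2})$, so that the rotational derivation $\theta_0$ interchanges $Q_1$ and $Q_2$ up to the real constants $\pm h/2$; you then feed the resulting nonvanishing Jacobians into (1), upgrading the inclusions $\R[P_1,Q_2]\subseteq R_1$, $\R[P_1,Q_i^2]\subseteq R$ to equalities by comparing Hilbert series, using Chevalley's theorem for $W_1$ (invariant degrees $2$ and $h/2$, which also covers the reducible case $h=4$, i.e.\ $W_1\cong I_2(2)$); and you get (3) from the explicit formula $D\doteq\theta_0/Q$. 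The paper argues in the opposite order and more structurally: $Q_2=Q/Q_1\in R_1$ because $Q$ and $Q_1$ are both $W_1$-antiinvariant (no explicit trigonometric identification, and no relabelling, is needed), $R_1=\R[P_1,Q_2]$ follows from the count $|\A_1|=(\deg Q_2-1)+(\deg P_1-1)$ for algebraically independent invariants, (2) is then read off as the standard fact that the Jacobian of the basic invariants $P_1,Q_2$ of $W_1$ is $\doteq Q_1$, and (3) comes from $2D_1(Q_2)=D(Q_2^2)\in\R^\times$ since $Q_2^2$ is a degree-$h$ generator of $R$. Your version is more computational but completely explicit and self-contained at the level of dihedral geometry; the paper's is shorter and leans on standard invariant-theoretic criteria. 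Both are sound, and your reliance on (2) before (1), as well as on the quoted formula $D\doteq\frac{1}{Q}(-x_2\partial_{x_1}+x_1\partial_{x_2})$, is legitimate since these inputs are available independently of the lemma.
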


\begin{proof} 
Thanks to the symmetry we only have to prove a half of the statement.
Since $Q$ and $Q_{1} $ are both $W_{1} $-antiinvariant,
$Q_{2} = Q/Q_{1} $ is $W_{1} $-invariant
and $Q_{2}^{2}  $ is $W$-invariant.
Note that $Q_{2} $ is a product of real linear forms.
So $Q_{2} $ and $P_{1} $ are algebraically independent.
Since 
\[
|\A_{1} |
=
h/2
=
(\deg Q_{2} - 1)
+
(\deg P_{1} - 1),
\]
we have 
$R_{1} = \R[P_{1}, Q_{2}]$.
Similarly we obtain
$R = \R[P_{1}, Q_{2}^{2} ]$.
 This proves (1).
The Jacobian
\[
-x_{2} (\partial Q_{2}/\partial x_{1})
+
x_{1} (\partial Q_{2}/\partial x_{2})
=
\det
\begin{pmatrix}
\partial P_{1} /\partial x_{1} 
&
\partial Q_{2} /\partial x_{1} \\
\partial P_{1} /\partial x_{2} 
&
\partial Q_{2} /\partial x_{2} 
\end{pmatrix}
\neq 0  
\]
is equal to $Q_{1} $ 
up to a nonzero constant multiple, which is (2).
Compute
\[
D_{1} (P_{1} ) = Q_{2} D(P_{1} ) = 0,
\,\,
2 D_{1} (Q_{2} ) = 2 Q_{2} D(Q_{2} ) = D(Q_{2}^{2}  )\in\R^{\times}.
\]
This proves (3).
$\square$ 
\end{proof} 

The {Euler derivation} 
$E=
I^{*} (dP_{1} )
=
I^{*} (x_{1} dx_{1} + x_{2} dx_{2})
=
x_{1} \partial_{x_{1} } 
+
x_{2} \partial_{x_{2} } 
$ 
satisfies $E(\alpha) = \alpha$ for all $\alpha\in V^{*} $
and belongs to $D(\A, (1,1))$.

\begin{proposition}
\label{proposition3.2} 
A basis for $D(\A, (a_{1} ,a_{2} ))$
is given in Table 3 for 
$
-1\leq a_{1} \leq 1,
-1\leq a_{2} \leq 1.
$  
\end{proposition}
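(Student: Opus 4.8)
The plan is to prove Proposition \ref{proposition3.2} by exhibiting, for each of the nine pairs $(a_1,a_2)$ with $-1\le a_i\le 1$, two explicit derivations in $D(\A,(a_1,a_2))$ and then verifying that they form a basis by the determinant criterion of Theorem \ref{T. Abe's criterion 2 -theorem-}. Since $\ell=2$, a pair $\theta_1,\theta_2\in D(\A,(a_1,a_2))$ is an $S$-basis precisely when $\det[\theta_j(x_i)]\doteq Q^{(a_1,a_2)}=Q_1^{a_1}Q_2^{a_2}$. So the whole argument reduces to: (i) membership, i.e. checking that each candidate $\theta$ satisfies $\theta\in D(\A,-\infty)$ and $\theta(\alpha_H)\in\alpha_H^{\bfk(H)}S_{(\alpha_H)}$ for every $H$, and (ii) a single $2\times 2$ determinant computation, matched against $Q_1^{a_1}Q_2^{a_2}$ up to a nonzero scalar.

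The organizing principle for the candidates is the multi-arrangement structure already assembled in Lemma \ref{lemma3.1}: the building blocks are $E=x_1\partial_{x_1}+x_2\partial_{x_2}$, $D_1=Q_2D$, $D_2=Q_1D$, and the derivations $I^*(dP_2)$, $I^*(dQ_i)$, $I^*(dQ_i/Q_i)$ — exactly the $\theta_1,\theta_2$ appearing in the low-degree rows of Tables 1 and 2 (the cases there with $p=q=0$, or $(a_1,a_2)$ near the origin). For the centre case $(0,0)$ one takes $\partial_{x_1},\partial_{x_2}$, with determinant $\equiv 1\doteq Q_1^0Q_2^0$. For $(1,1)$ one takes $E,I^*(dP_2)$, whose Jacobian $\det[\partial P_i/\partial x_j]\doteq Q=Q_1Q_2$ by the classical Saito fact already quoted. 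The mixed and $\pm1$ cases are handled by the $D_i$ and the $I^*(dQ_i)$, $I^*(dQ_i/Q_i)$: for instance $E,I^*(dQ_1)$ should land in $D(\A,(1,0))$-type cases because $I^*(dQ_1)(\alpha_H)$ is divisible by $\alpha_H$ exactly for $H\in\A_1$, while $D_1,I^*(dQ_1/Q_1)$ covers a case with a negative exponent on $\A_1$, using $D_1(Q_2)\in\R^\times$, $D_1(P_1)=0$ from Lemma \ref{lemma3.1}(3) and the Jacobian identity \ref{lemma3.1}(2) to evaluate the determinant. Each determinant comes out, via Lemma \ref{lemma3.1}(1)--(3) and the chain rule, to a scalar multiple of the appropriate $Q_1^{a_1}Q_2^{a_2}$, at which point Theorem \ref{T. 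Abe's criterion 2 -theorem-} closes the case. Symmetry $\A_1\leftrightarrow\A_2$ (equivalently $1\leftrightarrow 2$ everywhere) halves the casework, as in the proof of Lemma \ref{lemma3.1}.

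The membership verifications are the delicate part, though each is short: one fixes $H$, chooses an orthonormal basis $x_1,x_2$ of $V^*$ with $H=\ker(x_1)$, and checks the order of vanishing of $\theta(x_1)$ in $S_{(x_1)}$ together with regularity of $\theta(x_2)$ after clearing $Q/\alpha_H$; the reflection $s_H$ sends $x_1\mapsto -x_1$ and fixes $x_2$, which forces the parity/divisibility statements just as in the proof of Proposition \ref{123}(1). The point to watch is the $I^*(dQ_i/Q_i)$ terms, where $\theta(\alpha_H)$ must be shown to have exponent exactly $-1$ along $H\in\A_i$ and exponent $\ge 0$ along $H\in\A_{3-i}$; this is where one invokes that $Q_i$ is a product of real linear forms, none of which is proportional to any $\alpha_H$ with $H\in\A_{3-i}$, so that $dQ_i/Q_i$ has a logarithmic pole only along the lines of $\A_i$. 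I expect the main obstacle to be bookkeeping: ensuring that the exponent produced by each candidate on each of the two orbits matches $(a_1,a_2)$ on the nose for all nine cases, rather than being off by one — but this is controlled entirely by Lemma \ref{lemma3.1} and a degree count, and no genuinely new idea beyond Theorem \ref{T. Abe's criterion 2 -theorem-} is needed here, the heavier machinery (universal derivations, $\nabla_D$) being reserved for the large-exponent cases of Sections 3--4.
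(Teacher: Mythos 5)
Your proposal is correct and follows essentially the same route as the paper: place the candidates of Table 3 in $D(\A,-\infty)$ and in $D(\A,(a_1,a_2))$ (the paper does this globally, via $I^{*}(dP)\in D(\A,(1,1))$ for $W$-invariant $P$, Lemma \ref{lemma3.1}(1) and $QD=Q_1D_1=Q_2D_2\in\Der_S$, rather than by your local reflection-parity check, but the content is the same), and then conclude with Theorem \ref{T. Abe's criterion 2 -theorem-} using the Jacobian identity of Lemma \ref{lemma3.1}(2). The only slip is in your illustrative example: $E,\,I^{*}(dQ_1)$ is a basis for $D(\A,(0,1))$, not a $(1,0)$-type case, since $I^{*}(dQ_1)(\alpha_H)$ is divisible by $\alpha_H$ exactly for $H\in\A_2$ (where $Q_1$ is $s_H$-invariant) and the determinant is $\doteq Q_2$ --- precisely the bookkeeping your determinant check would force right.
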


\begin{table}[h]
\label{Table3}
\begin{center}
\begin{tabular}{|c|c|c|c|}
\hline
  $(a_{1}, a_{2})$ 
&basis for $D(\A, (a_{1} , a_{2} ))$
&exponents of  $(\A, (a_{1} , a_{2} ))$
&their difference
\\
\hline
  $(1, 1)$ 
& $E, I^{*}(dP_{2})$
& $ 1, h-1$
&
$h-2$
\\
\hline
  $(1, 0)$ 
& $E, I^{*}(dQ_{2})$
& $ 1, (h/2)-1$
&
$(h/2)-2$
\\
\hline
  $(0, 1)$ 
& $E, I^{*}(dQ_{1})$
&
$ 1, (h/2)-1$
&
$(h/2)-2$
\\
\hline
  $(1, -1)$ 
& $ I^{*}(dQ_{2}/Q_{2} ), E$
& $ -1, 1$
&
$2$
\\    
\hline
  $(0, 0)$ 
& $\partial_{x_{1} }, \partial_{x_{2} } $
& $ 0, 0$
&
$0$ \\    
\hline
  $(-1, 1)$ 
& $ I^{*}(dQ_{1}/Q_{1} ), E$    
& $ -1, 1$
&
$2$
\\    
\hline
  $(0, -1)$ 
& $D_{2}, I^{*}(dQ_{2}/Q_{2} )$
& $ 1-(h/2), -1$
&
$(h/2)-2$
\\    
\hline
  $(-1, 0)$ 
& $D_{1}, I^{*}(dQ_{1}/Q_{1} )$
& $ 1-(h/2), -1$
&
$(h/2)-2$
\\    
\hline
  $(-1, -1)$ 
& $D, I^{*}(dQ/Q)$
& $ 1-h, -1$
&
$h-2$
\\    
\hline
\end{tabular}
\end{center}
\caption{The exponents of $(\A, (a_{1}, a_{2}))$ 
($
-1\leq a_{1} \leq 1,
-1\leq a_{2} \leq 1
$)}
\end{table}

\begin{proof}
Let $\omega_{0} =
-x_{2} d{x_{1} } 
+
x_{1} d{x_{2} }. 
$ 
Note that $\omega_{0} \wedge d\alpha = 
{
-\alpha}(dx_{1} \wedge dx_{2} )$ 
for any $\alpha\in V^{*} $.
It is easy to see that each of
$
dP_{1} , dP_{2} , dQ_{1} , dQ_{2} , dQ_{1}/Q_{1}  , dQ_{2}/Q_{2}  ,
\omega_{0}/Q,$
 $\omega_{0}/Q_{1}$
and $\omega_{0}/Q_{2}  
$
belongs to $\Omega(\A, \infty)$ defined in Section 1.
Note that 
$D = I^{*} (\omega_{0} )/Q$
and
$D_{i}  = I^{*} (\omega_{0} )/Q_{i} 
\,\,\,(i = 1, 2)$.
Thus all of the derivations in the table lie in 
$D(\A, -\infty) = I^{*}(\Omega(\A, \infty)) $.  
 
If $P$ is $W$-invariant, then  
$I^{*} (dP)\in D(\A, (1, 1)). $ 
Therefore
$I^{*} (dQ_{1} )\in D(\A, (0, 1)) $ 
and
$I^{*} (dQ_{2} )\in D(\A, (1, 0)) $
because of Lemma \ref{lemma3.1} (1). 
{
We thus have
$I^{*} (dQ_{1}/Q_{1})\in  D(\A, (-1,1))$
and
$
I^{*} (dQ_{2}/Q_{2})\in  D(\A, (1,-1)).
$
}
Since $QD = Q_{1} D_{1} 
= Q_{2} D_{2} $ lies in $\Der_{S} $,
we get 
$D\in D(\A, (-1,-1))$,
$D_{1} \in D(\A, (-1, 0))$
and
$D_{2} \in D(\A, (0, -1))$.
Now apply Theorem \ref{T. Abe's criterion 2 -theorem-}
noting Lemma \ref{lemma3.1} (2). 
$\square$ 
\end{proof}

\begin{lemma}
\label{lemma3.3} 
When $h\geq 6$ is even, $D_{i}$ is a primitive derivation of
the irreducible Coxeter arrangement $\A_{i} \,\,\,
(i=1, 2)$. 
\end{lemma}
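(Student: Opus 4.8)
The plan is to check directly that, after rescaling by a nonzero constant, $D_i$ satisfies the two equations defining a primitive derivation of the irreducible Coxeter arrangement $\A_i$, using nothing beyond Lemma \ref{lemma3.1}. Since $h\geq 6$, the subgroup $W_i$ is an irreducible Coxeter group (isomorphic to $I_2(h/2)$) whose Coxeter arrangement is $\A_i$, so the notion of a primitive derivation for $\A_i$ makes sense and is well defined up to a nonzero scalar.

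First I would pin down a system of basic invariants for $W_1$. By Lemma \ref{lemma3.1}~(1) we have $R_1=S^{W_1}=\R[P_1,Q_2]$, so $P_1$ and $Q_2$ are basic invariants of $W_1$, of degrees $\deg P_1=2$ and $\deg Q_2=|\A_1|=h/2$. The hypothesis $h\geq 6$ gives $\deg P_1=2<h/2=\deg Q_2$, so $P_1$ is the basic invariant of lower degree and $Q_2$ the one of higher degree; this is exactly the ordering required by the definition of a primitive derivation. (This is precisely where $h\geq 6$, equivalently $W\neq B_2$, enters: for $h=4$ the two degrees coincide and $\A_1$ is not even irreducible.)

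Next I would read off from Lemma \ref{lemma3.1}~(3) that $D_1(P_1)=0$ and $D_1(Q_2)=c$ for some $c\in\R^{\times}$. Hence $c^{-1}D_1$ annihilates the lower-degree basic invariant $P_1$ and sends the top-degree basic invariant $Q_2$ to $1$, which is the defining property of a primitive derivation of $\A_1$ with respect to the basic invariants $P_1,Q_2$; since a primitive derivation is unique up to a nonzero scalar, $D_1$ itself is one. The case $i=2$ is identical after interchanging the roles of the two orbits: by Lemma \ref{lemma3.1}~(1), $R_2=\R[P_1,Q_1]$ with $\deg P_1=2<h/2=\deg Q_1$, and by Lemma \ref{lemma3.1}~(3), $D_2(P_1)=0$ and $D_2(Q_1)\in\R^{\times}$.

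I do not anticipate a genuine obstacle: the statement reduces entirely to the facts already recorded in Lemma \ref{lemma3.1}. The only point that needs care is the degree bookkeeping — one must verify $\deg P_1<\deg Q_{3-i}$ so that $P_1$ and $Q_{3-i}$ are ordered as the definition of primitive derivation demands, and this inequality is exactly what the assumption $h\geq 6$ provides.
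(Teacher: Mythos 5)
Your proof is correct and follows the same route as the paper, whose entire proof is the citation ``By Lemma \ref{lemma3.1} (3)''. You merely make explicit the details the paper leaves implicit: that $P_1,Q_{3-i}$ are basic invariants of $W_i$ by Lemma \ref{lemma3.1} (1), that $h\geq 6$ gives the degree ordering $\deg P_1 < \deg Q_{3-i}$, and that the scalar normalization is harmless since a primitive derivation is only determined up to a nonzero constant.
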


\begin{proof}
By Lemma \ref{lemma3.1} (3).
$\square$  
\end{proof}

{For $s, t\in\Z$ with $t-s\in 2\Z$,} define
$$
E_{1}^{(s, t)} 
:=
\nabla_{D}^{-t} \nabla_{D_{1}}^{t-s} E,
\,\,\,\,\,\,\,\,\,
E_{2}^{(s, t)} 
:=
\nabla_{D}^{-s} \nabla_{D_{2}}^{s-t} E.
$$

\begin{proposition}
\label{Estuniversal} 
(1)
If $t \in \Z_{\geq 0}$ 
and $t-s \in 2\Z$, then 
$E_{1}^{(s, t)} $ is $(s, t)$-universal,

(2)
If $s \in \Z_{\geq 0}$ 
and $s-t \in 2\Z$, then 
$E_{2}^{(s, t)} $ is $(s, t)$-universal.
\end{proposition}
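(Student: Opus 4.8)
The strategy is to combine Theorem~\ref{Dinverseuniversal} with the base cases supplied by Proposition~\ref{proposition3.2} and the fact (Lemma~\ref{lemma3.3}) that $D_1$ is a genuine primitive derivation for the irreducible arrangement $\A_1\simeq I_2(h/2)$. By symmetry it suffices to prove (1). First I would record the base case $(s,t)=(0,0)$: here $E_1^{(0,0)}=E$, which is $(0,0)$-universal (i.e.\ ${\bf 0}$-universal) by the Example following the Definition of universality. Next I would treat the ``diagonal'' cases where $t=s$ is an arbitrary nonnegative even--difference integer; but since $t-s\in 2\Z$ and $t\geq 0$, the parameter that really drives the induction is $t$, and within a fixed value of $t-s$ one moves along in steps of $\nabla_{D_1}$.

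The cleanest organization is a double induction. The outer variable is $t\geq 0$. Observe that applying $\nabla_D^{-1}$ to $E_1^{(s,t)}=\nabla_D^{-t}\nabla_{D_1}^{t-s}E$ produces $E_1^{(s,t+1)}$ (up to the bookkeeping that $(t+1)-s$ still differs from $(t-1+2)$ in the right parity); more precisely, from the definition $\nabla_D^{-1}E_1^{(s,t)}=\nabla_D^{-(t+1)}\nabla_{D_1}^{t-s}E$, and one must reconcile this with $E_1^{(s',t+1)}$ for the appropriate $s'$. So the real inductive step I would use is: $E_1^{(s,t+2)}=\nabla_D^{-2}\nabla_{D_1}^2 E_1^{(s,t)}$ combined with an analogous Euler-type relation. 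Actually the slicker route is to induct directly on $t$ using Theorem~\ref{Dinverseuniversal}: if $E_1^{(s,t)}$ is $(s,t)$-universal then $\nabla_D^{-1}E_1^{(s,t)}$ is $(s+1,t+1)$-universal, and one checks $\nabla_D^{-1}E_1^{(s,t)}=E_1^{(s+1,t+1)}$ straight from the definitions since $(t+1)-(s+1)=t-s$. This immediately propagates universality along the lines $t-s=\text{const}$ starting from the smallest admissible $t$.

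Therefore the base cases needed are exactly the pairs $(s,t)$ with $t$ minimal for each fixed even value $t-s=2m$, i.e.\ $(s,t)=(-2m,0)$ when $m\leq 0$ and $(s,t)=(0,2m)$ when $m\geq 0$; equivalently, using the symmetric definition, either $s=0$ or $t=0$. For $t=0$ we need $E_1^{(s,0)}=\nabla_{D_1}^{-s}E$ to be $(s,0)$-universal for $s\leq 0$; this is precisely the Proposition preceding Proposition~\ref{123}, applied to the \emph{irreducible arrangement} $\A_1$ with its primitive derivation $D_1$ (Lemma~\ref{lemma3.3}), noting that $(s,0)$-universality for $(\A,\bfk)$ with $\bfk=(s,0)$ is governed entirely by how $\zeta$ behaves on $\A_1$ since $\bfk$ vanishes on $\A_2$ --- one must verify that $(s,0)$-universality for $\A$ is equivalent to $(-s)$-constant-universality for $\A_1$, which follows by comparing the defining bijectivity conditions for $\Psi_\zeta$ on $D(\A,2\bfk)$ versus $D(\A_1,-2s)$ via Theorem~\ref{T. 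Abe's criterion 2 -theorem-} and the determinant formulas of Lemma~\ref{lemma3.1}(2). The small positive-$t$, small-$s$ cases (like $(s,t)=(0,2)$ or $(-1,1)$, $(1,1)$) I would pin down directly against Table~3, checking that $E_1^{(s,t)}$ reduces to the listed basis derivation or to $\nabla_{D_1}E$ as appropriate.

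The main obstacle I anticipate is not Theorem~\ref{Dinverseuniversal} itself but the bookkeeping identity $\nabla_D^{-1}E_1^{(s,t)}=E_1^{(s+1,t+1)}$ together with the compatibility of $\nabla_D$, $\nabla_{D_1}$, and $\nabla_{D_2}$ --- since these connections do not commute in general, I must use that $D_1=Q_2 D$ and that $Q_2$ is $W$-invariant with $D(Q_2^2)\in\R^\times$ (Lemma~\ref{lemma3.1}), so that on the $T$-module $D(\A,-\infty)^W$ the operators $\nabla_D$ and $\nabla_{D_1}$ are related by a scalar (an element of $T$, in fact of $\R$ after the right normalization), making the iterated expressions $\nabla_D^{-t}\nabla_{D_1}^{t-s}E$ unambiguous and the shift relation hold. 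Establishing that this ``scalar'' genuinely lands in $\R^\times$ (not merely in $T$) in the relevant degrees is the delicate point, and it is exactly where the hypothesis $t\geq 0$, $t-s\in 2\Z$ enters to keep all exponents in the valid range so that Theorem~\ref{Tautomorphism} and Proposition~\ref{123} apply.
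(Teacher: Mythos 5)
Your overall route is the paper's own: establish the case $t=0$, namely that $\nabla_{D_1}^{t-s}E$ is $(s-t,0)$-universal, and then propagate in $t$ by iterating Theorem \ref{Dinverseuniversal}, using the purely definitional identity $\nabla_D^{-1}E_1^{(s,t)}=\nabla_D^{-(t+1)}\nabla_{D_1}^{t-s}E=E_1^{(s+1,t+1)}$. However, two genuine gaps remain. First, you rest the base case on Lemma \ref{lemma3.3} and on $\A_1$ being the irreducible arrangement $I_2(h/2)$; this fails for $h=4$, which is within the standing hypotheses. There $\A_1$ consists of two orthogonal lines, $W_1$ is reducible, Lemma \ref{lemma3.3} is stated only for $h\geq 6$, and the proposition that $\nabla_D^kE$ is $(-\bfk)$-universal (proved via Yoshinaga and Abe--Terao for an irreducible Coxeter arrangement with its primitive derivation) cannot simply be cited for $\A_1$. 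The paper treats $h=4$ separately by direct computation with $Q_1=x_1x_2$, showing $\nabla_{\partial_{x_i}}\nabla_{D_1}^{2n}E\doteq x_i^{-4n}\partial_{x_i}$; your argument omits this case entirely.

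Second, $\bfk$-universality requires $\zeta\in D(\A,-\infty)^{W}$, i.e.\ invariance under all of $W$ and not merely $W_1$, and this is also a hypothesis of Theorem \ref{Dinverseuniversal}, so without it your induction cannot even start. Your claimed equivalence between $(s,0)$-universality for $\A$ and constant universality for $\A_1$ ignores this point, and it is exactly where the parity hypothesis $t-s\in 2\Z$ enters: since $w_2D_1=\det(w_2)\,D_1$ for $w_2\in W_2$ and $W_1$ is normal in $W$, the map $\nabla_{D_1}^{n}$ on $D(\A_1,-\infty)^{W_1}$ is $W$-equivariant only for $n$ even, which is what makes $\nabla_{D_1}^{t-s}E$ $W$-invariant. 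Instead you attribute the role of the parity to a claim that $\nabla_D$ and $\nabla_{D_1}$ differ on $D(\A,-\infty)^{W}$ by a scalar in $T$ (or $\R$); that is false, since $\nabla_{D_1}=Q_2\nabla_D$ with $Q_2$ a degree-$h/2$ polynomial not in $T$, and it is also unnecessary, because the shift identity above needs no commutation of connections. Finally, a bookkeeping slip: the base cases of your induction are all pairs $(s,0)$ with $s\in 2\Z$ of either sign, so you need $\nabla_{D_1}^{k}E$ to be universal for $\A_1$ for all $k\in\Z$ (which the cited proposition does provide), not ``either $s=0$ or $t=0$''; the pair $(0,2m)$ already involves $\nabla_D^{-2m}$ and is not a legitimate base case.
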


\begin{proof}
It is enough to show (1)
because of the symmetry of the statement.

{\it Case 1.} 
When $h\geq 6$ is even, 
$\A_{1} $ is an irreducible Coxeter arrangement of $h/2$ lines.  
By Lemma \ref{lemma3.3}, $D_{1}$ is a primitive derivation 
of $\A_{1} $. 
Thus $$
\nabla_{\partial_{x_{1} }} \nabla_{D_{1}}^{t-s} E,
\dots,
\nabla_{\partial_{x_{\ell} }} \nabla_{D_{1}}^{t-s} E
$$ 
form an $S$-basis for $D(\A, (2(s-t), 0))$.  
Note that
$D_{1}= Q_{2} D$ satisfies 
$$
w_{1} D_{1} = D_{1},
w_{2} D_{1} = \det(w_{2} ) D_{1}
$$
for any $w_{1} \in W_{1}, w_{2} \in W_{2} $. 
%
{Since $W_{1} $ is a normal subgroup of $W$,
$D(\A_{1}, -\infty)^{W_{1} } $ is naturally a
$W$-module and the map
$\nabla_{D_{1}}^{n} : D(\A_{1}, -\infty)^{W_{1} } \rightarrow
D(\A_{1}, -\infty)^{W_{1} } $ is a
$W$-equivariant bijection when $n$ is even.  
}
Thus 
$\nabla_{D_{1} }^{t-s} E \in D(\A, -\infty)^{W}$.
This implies 
that
$\nabla_{D_{1} }^{t-s} E$ is $(s-t, 0)$-universal
when ${t-s} \in 2\Z$.  
Apply Theorem \ref{Dinverseuniversal}.

{\it Case 2.} 
Let $h=4$.  Then $W$ is of type $B_{2} $.
We may choose an orthonormal basis for $V^{*} $ with
$Q_{1} = x_{1} x_{2} $ and
$Q_{2} = (x_{1} + x_{2})(x_{1} - x_{2}) $.
Then
\[
D_{1} =
-\frac{1}{x_{1} } \partial_{x_{1} } 
+
\frac{1}{x_{2} } \partial_{x_{2} }
\]
and
\begin{align*} 
&\nabla_{D_{1}}^{2n} E
=
-
{(4n-3)!!}
\left(
x_{1}^{1-4n} \partial_{x_{1} } 
+
x_{2}^{1-4n}  \partial_{x_{2} }
\right)
\in D(\A, -\infty)^{W}
\,\,\,\,(n > 0),
\\
&
\nabla_{D_{1}}^{-2n} E
=
\frac{1}{(4n+1)!!}
\left(
x_{1}^{4n+1} \partial_{x_{1} } 
+
x_{2}^{4n+1}  \partial_{x_{2} }
\right)
\in D(\A, -\infty)^{W}\,\,\,\,(n \geq 0),
\end{align*} 
{where $(2m-1)!! = \prod_{i=1}^{m} (2i-1) $.}
Thus
\[
\nabla_{\partial_{x_{1} } } 
\nabla_{D_{1}}^{2n} E \doteq
x_{1}^{-4n}\partial_{x_{1} }
,
\,\,\,
\nabla_{\partial_{x_{2} } } 
\nabla_{D_{1}}^{2n} E \doteq
x_{2}^{-4n}\partial_{x_{2} }
\,\,\,(n\in\Z).
\]
This implies 
that
$\nabla_{D_{1} }^{t-s} E$ is $(s-t, 0)$-universal
when $s-t \in 2\Z$.  
Apply Theorem \ref{Dinverseuniversal}.  
$\square$ 
\end{proof}

We say that a pair $(a_{1}, a_{2})$ is {\bf
exceptional}
if
\[
a_{1}\in 2\Z \mbox{~and~} 
a_{1} -a_{2} \equiv 2 \  ({\rm mod} \,4). 
\]
If $(a_{1}, a_{2})$ is not exceptional,
then we call $(a_{1}, a_{2})$ {
{\bf ordinary}.
We may apply 
Theorem \ref{proposition3.2} and Proposition \ref{psiuniversal} to get
the following proposition:

\begin{proposition}
\label{ordinarybasis} 
Suppose that $(a_{1}, a_{2})$ is ordinary
and that either $p\geq 0$ or
$q\geq 0$ in Table 1.
Then
$
\nabla_{{\theta_{1}}}\zeta,
\nabla_{{\theta_{2}}}\zeta
$ 
form an $S$-basis for $D(\A, (a_{1} , a_{2} ))$
as in Table 1, where 
$
E^{(s, t)}
$
stands for 
$
E_{1}^{(s, t)}$ if $t \geq 0$
or it
stands for 
$E_{2}^{(s, t)}$ if $s \geq 0.$
\end{proposition}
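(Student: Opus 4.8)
The plan is to verify the three hypotheses of Proposition~\ref{psiuniversal} for each ordinary case in Table~1 and then read off the basis.  First I would observe that in every row of Table~1 the displayed $\zeta$ is $E^{(s,t)}$ for the pair $(s,t)$ equal to $(2p,2q)$ or $(2p+1,2q+1)$ appearing in the second column, and that the hypothesis ``$p\geq 0$ or $q\geq 0$'' guarantees that $t\geq 0$ or $s\geq 0$, so Proposition~\ref{Estuniversal} applies and tells us that $\zeta$ is $(s,t)$-universal with $\zeta\in D(\A,-\infty)^{W}$.  Thus the map $\Psi_{\zeta}\colon D(\A,\mathbf{m})\to D(\A,2(s,t)+\mathbf{m})$ is bijective for any $\mathbf{m}\colon\A\to\{-1,0,1\}$ by Proposition~\ref{psiuniversal}.

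Next I would, case by case, identify the multiplicity $\mathbf{m}=(m_{1},m_{2})$ with values in $\{-1,0,1\}$ such that $2(s,t)+\mathbf{m}=(a_{1},a_{2})$.  For instance, in the first row $(a_{1},a_{2})=(4p+1,4q+1)$ with $\zeta=E^{(2p,2q)}$, one takes $\mathbf{m}=(1,1)$, so $2(2p,2q)+(1,1)=(4p+1,4q+1)$; in the second row $(4p-1,4q-1)$ one takes $\mathbf{m}=(-1,-1)$; in the mixed rows one takes $\mathbf{m}=(-1,1)$ or $(1,-1)$; in the rows ending in $4q$ or $4p$ one takes the appropriate entry of $\mathbf{m}$ to be $0$; and so on, with the two subrows of each merged block (one with second-column entry $E^{(2p,2q)}$, one with $E^{(2p+1,2q+1)}$) yielding the same $\mathbf{m}$ once the arithmetic is carried out.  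This is a routine but slightly tedious bookkeeping task: one simply checks that $(a_{1},a_{2})-(2s,2t)$ lands in $\{-1,0,1\}^{2}$ in each of the fourteen rows.

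Then, having fixed $\mathbf{m}$, I would invoke Proposition~\ref{proposition3.2} (= Theorem~\ref{proposition3.2}): since $-1\leq m_{1},m_{2}\leq 1$, Table~3 supplies an explicit $S$-basis $\theta_{1},\theta_{2}$ for $D(\A,\mathbf{m})$, namely exactly the pair listed in the third column of Table~1 (e.g.\ $E,I^{*}(dP_{2})$ for $\mathbf{m}=(1,1)$, or $D,I^{*}(dQ/Q)$ for $\mathbf{m}=(-1,-1)$, or $\partial_{x_{1}},\partial_{x_{2}}$ for $\mathbf{m}=(0,0)$).  Applying the bijection $\Psi_{\zeta}$ of Proposition~\ref{psiuniversal} to this basis, the images $\nabla_{\theta_{1}}\zeta=\Psi_{\zeta}(\theta_{1})$ and $\nabla_{\theta_{2}}\zeta=\Psi_{\zeta}(\theta_{2})$ form an $S$-basis for $D(\A,2(s,t)+\mathbf{m})=D(\A,(a_{1},a_{2}))$, which is precisely the assertion of the proposition.

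The only genuine point requiring care—and hence the main obstacle—is confirming that the third-column entries of Table~1 really are the Table~3 bases for the correct $\mathbf{m}$, i.e.\ matching up the combinatorics of the sixteen residue classes mod $4$ with the nine sign patterns in $\{-1,0,1\}^{2}$, and checking that the ``exceptional'' pairs (those with $a_{1}\in 2\Z$ and $a_{1}-a_{2}\equiv 2\pmod 4$) are exactly the ones that fall outside this scheme.  For the exceptional pairs, no choice of $(s,t)$ with $\zeta=E^{(s,t)}$ and $\mathbf{m}\in\{-1,0,1\}^{2}$ gives $2(s,t)+\mathbf{m}=(a_{1},a_{2})$ while keeping $t\geq 0$ or $s\geq 0$ available, which is why they are deferred to Section~4; but this does not affect the present proof, which only concerns the ordinary rows.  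Everything else is a direct chain of the cited results.
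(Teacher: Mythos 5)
Your proposal is correct and follows essentially the same route as the paper, which likewise obtains the result by combining Proposition \ref{Estuniversal} (universality of $E^{(s,t)}$ under the stated sign conventions), Proposition \ref{psiuniversal} applied with the multiplicity $\mathbf{m}\in\{-1,0,1\}^{2}$ satisfying $2(s,t)+\mathbf{m}=(a_{1},a_{2})$, and the Table~3 bases from Proposition \ref{proposition3.2}. The case-by-case bookkeeping you describe, including why the exceptional pairs escape this scheme (the required $s-t$ would be odd), matches the paper's intent exactly.
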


%
%

\section{The exceptional cases}
Suppose that $(a_{1}, a_{2}) \in \Z^{2} $ is exceptional.
Write
\[
(a_{1}, a_{2}) = (4p+2, 4q) \ \ \mbox{\rm or} \ \ 
(a_{1}, a_{2}) = (4p, 4q+2)
\,\,\,\,\,\,(p, q\in\Z).
\]

\begin{proposition}
\label{Phiisom} 
Suppose that $\zeta$ is $(2p, 2q)$-universal.
Then 
the map 
\begin{align*} 
\Phi^{(1)}_{\zeta} 
:~
&\Der_{S} 
\longrightarrow
D(\A, (4p+2, 4q))\\
&~~~~\theta
\longmapsto
Q_{1} (\nabla_{\theta} \zeta) 
-
(4p+1) \theta(Q_{1}) \zeta
\end{align*} 
is an $S$-linear {bijection.}
Similarly  
the map 
\begin{align*} 
\Phi^{(2)}_{\zeta} 
:~
&\Der_{S} 
\longrightarrow
D(\A, (4p, 4q+2)) \\
&~~~~\theta
\longmapsto
Q_{2} (\nabla_{\theta} \zeta) 
-
(4q+1) \theta(Q_{2}) \zeta
\end{align*} 
is an $S$-linear {bijection.}
\end{proposition}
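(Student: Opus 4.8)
My plan is to prove the assertion for $\Phi^{(1)}_{\zeta}$; the case of $\Phi^{(2)}_{\zeta}$ then follows verbatim after interchanging $\A_{1}$ and $\A_{2}$ (equivalently $Q_{1}\leftrightarrow Q_{2}$ and $p\leftrightarrow q$). That $\Phi^{(1)}_{\zeta}$ is $S$-linear is immediate from the Leibniz rule. Let $d$ be the (homogeneous) degree of $\zeta$. I would first note that $\nabla_{E}\zeta=d\,\zeta$, which follows from $(\nabla_{E}\zeta)(\alpha)=E(\zeta(\alpha))$ and Euler's identity; hence, setting
\[
G(\theta):=Q_{1}\,\theta-\frac{4p+1}{d}\,\theta(Q_{1})\,E\qquad(\theta\in\Der_{S}),
\]
one gets $\nabla_{G(\theta)}\zeta=Q_{1}(\nabla_{\theta}\zeta)-(4p+1)\theta(Q_{1})\zeta=\Phi^{(1)}_{\zeta}(\theta)$. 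Here $d\neq0$: since $\zeta$ is $(2p,2q)$-universal, $\nabla_{\partial_{x_{1}}}\zeta,\nabla_{\partial_{x_{2}}}\zeta$ form an $S$-basis of $D(\A,(4p,4q))$, each of degree $d-1$, so Theorem \ref{T. Abe's criterion 2 -theorem-} gives $2(d-1)=\deg Q^{(4p,4q)}=2(p+q)h$ and $d=(p+q)h+1$. Since $Q_{1}\theta\in D(\A,(1,0))$ for $\theta\in\Der_{S}$ and $E\in D(\A,(1,1))\subseteq D(\A,(1,0))$, I conclude $G(\theta)\in D(\A,(1,0))$ and thus $\Phi^{(1)}_{\zeta}=\Psi_{\zeta}\circ G$, where $\Psi_{\zeta}\colon D(\A,(1,0))\to D(\A,(4p+1,4q))$ is the isomorphism of Proposition \ref{psiuniversal} (with $\mathbf{m}=(1,0)$). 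In particular $\Phi^{(1)}_{\zeta}(\theta)\in D(\A,(4p+1,4q))$ for all $\theta\in\Der_{S}$.

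Next I would sharpen this to $\Phi^{(1)}_{\zeta}(\theta)\in D(\A,(4p+2,4q))$. Since $D(\A,(4p+1,4q))$ and $D(\A,(4p+2,4q))$ impose the same condition at the hyperplanes of $\A_{2}$, it is enough to fix $H\in\A_{1}$ and check $\Phi^{(1)}_{\zeta}(\theta)(\alpha_{H})\in\alpha_{H}^{4p+2}S_{(\alpha_{H})}$. Choosing an orthonormal basis with $H=\ker(x_{1})$, Proposition \ref{123}(3) gives $\zeta(x_{1})=x_{1}^{4p+1}g$ with $g$ a unit in $S_{(x_{1})}$, while $Q_{1}=x_{1}u$ with $u:=Q_{1}/x_{1}\in S$ a unit in $S_{(x_{1})}$. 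Expanding
\[
\Phi^{(1)}_{\zeta}(\theta)(x_{1})=x_{1}u\,\theta\!\left(x_{1}^{4p+1}g\right)-(4p+1)\,\theta(x_{1}u)\,x_{1}^{4p+1}g
\]
by the Leibniz rule, the two terms of order $x_{1}^{4p+1}$ cancel, leaving $x_{1}^{4p+2}\bigl(u\,\theta(g)-(4p+1)\theta(u)\,g\bigr)\in x_{1}^{4p+2}S_{(x_{1})}$. This cancellation is precisely what pins down the coefficient $4p+1$ in the definition of $\Phi^{(1)}_{\zeta}$, and it is the one genuinely computational point of the argument; I expect it to be the main obstacle.

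Finally, for bijectivity I would invoke Abe's criterion (Theorem \ref{T. Abe's criterion 2 -theorem-}): by the previous step $\Phi^{(1)}_{\zeta}(\partial_{x_{1}}),\Phi^{(1)}_{\zeta}(\partial_{x_{2}})\in D(\A,(4p+2,4q))$, so it remains to show $\det\!\left[\Phi^{(1)}_{\zeta}(\partial_{x_{j}})(x_{i})\right]\doteq Q^{(4p+2,4q)}=Q_{1}^{4p+2}Q_{2}^{4q}$. I would fix a homogeneous $S$-basis $e_{1},e_{2}$ of $D(\A,(1,0))$ (Proposition \ref{proposition3.2}) and write $G(\partial_{x_{j}})=\sum_{l}c_{lj}e_{l}$ with $c_{lj}\in S$; $S$-linearity of $\Psi_{\zeta}$ then yields
\[
\bigl[\Phi^{(1)}_{\zeta}(\partial_{x_{j}})(x_{i})\bigr]=\bigl[\Psi_{\zeta}(e_{l})(x_{i})\bigr]\,[c_{lj}],\qquad\bigl[G(\partial_{x_{j}})(x_{i})\bigr]=\bigl[e_{l}(x_{i})\bigr]\,[c_{lj}].
\]
Taking determinants and applying Theorem \ref{T. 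Abe's criterion 2 -theorem-} to the bases $e_{1},e_{2}$ of $D(\A,(1,0))$ and $\Psi_{\zeta}(e_{1}),\Psi_{\zeta}(e_{2})$ of $D(\A,(4p+1,4q))$ (a basis by Proposition \ref{psiuniversal}) gives $\det[e_{l}(x_{i})]\doteq Q_{1}$ and $\det[\Psi_{\zeta}(e_{l})(x_{i})]\doteq Q_{1}^{4p+1}Q_{2}^{4q}$. On the other hand $G(\partial_{x_{j}})(x_{i})=Q_{1}\delta_{ij}-\frac{4p+1}{d}(\partial_{x_{j}}Q_{1})x_{i}$, and the $2\times2$ determinant of $Q_{1}I$ minus a rank-one matrix, combined with Euler's identity $\sum_{i}x_{i}\partial_{x_{i}}Q_{1}=\tfrac{h}{2}Q_{1}$, gives $\det[G(\partial_{x_{j}})(x_{i})]=Q_{1}^{2}\bigl(1-\tfrac{(4p+1)h}{2d}\bigr)$; the scalar is nonzero, since $1=\tfrac{(4p+1)h}{2d}$ would force $2=(2p-2q+1)h$, impossible for $h\geq4$ with $2p-2q+1$ odd. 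Hence $\det[c_{lj}]\doteq Q_{1}$, so $\det[\Phi^{(1)}_{\zeta}(\partial_{x_{j}})(x_{i})]\doteq Q_{1}^{4p+2}Q_{2}^{4q}$, and Theorem \ref{T. Abe's criterion 2 -theorem-} shows $\Phi^{(1)}_{\zeta}(\partial_{x_{1}}),\Phi^{(1)}_{\zeta}(\partial_{x_{2}})$ form an $S$-basis of $D(\A,(4p+2,4q))$; being the images of the $S$-basis $\partial_{x_{1}},\partial_{x_{2}}$ of $\Der_{S}$, the $S$-linear map $\Phi^{(1)}_{\zeta}$ is therefore bijective.
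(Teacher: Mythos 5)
Your proof is correct, and its two essential computations are the same as the paper's: the Leibniz cancellation that upgrades the order along $\A_{1}$ from $4p+1$ to $4p+2$ (exactly the paper's computation of $[\Phi^{(1)}_{\zeta}(\theta)](\alpha_{1})$), and the Euler-identity evaluation $\nabla_{E}\zeta=(1+h(p+q))\zeta$ producing the scalar $1-\tfrac{(4p+1)h}{2(1+h(p+q))}$, whose nonvanishing is the same observation that $2=h(2p-2q+1)$ is impossible for even $h\geq 4$ (and note $d=1+h(p+q)\neq 0$ simply because $h$ is even, a point you use but do not quite spell out). Where you differ is the packaging: the paper verifies membership in $D(\A,(4p+2,4q))$ by direct local computations at hyperplanes of both orbits and gets $\det\bigl[\Phi^{(1)}_{\zeta}(\partial_{x_{j}})(x_{i})\bigr]\doteq Q_{1}^{4p+2}Q_{2}^{4q}$ by a multilinear expansion of the $2\times 2$ determinant followed by the column substitution $x_{1}\nabla_{\partial_{x_{1}}}\zeta+x_{2}\nabla_{\partial_{x_{2}}}\zeta=\nabla_{E}\zeta$, whereas you factor $\Phi^{(1)}_{\zeta}=\Psi_{\zeta}\circ G$ with $G(\theta)=Q_{1}\theta-\tfrac{4p+1}{d}\theta(Q_{1})E\in D(\A,(1,0))$, so that Proposition \ref{psiuniversal} (with ${\bf m}=(1,0)$) gives the $\A_{2}$-conditions and membership in $D(\A,-\infty)$ for free, leaving only the local check along $\A_{1}$, and the determinant identity reduces to the elementary rank-one computation $\det[G(\partial_{x_{j}})(x_{i})]=Q_{1}^{2}\bigl(1-\tfrac{(4p+1)h}{2d}\bigr)$ divided by $\det[e_{l}(x_{i})]\doteq Q_{1}$ from Proposition \ref{proposition3.2}. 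Your route buys a more conceptual explanation of why the image lands (almost) in $D(\A,(4p+2,4q))$ and replaces the paper's determinant manipulation by bookkeeping with known bases, at the modest cost of invoking Propositions \ref{psiuniversal} and \ref{proposition3.2} and the degree computation $d=1+h(p+q)$, which the paper simply states; both arguments are equally rigorous.
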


\begin{proof}
It is enough to show the first half
because of the symmetry.
Let $\theta \in \Der_S$.  We
first prove that $\Phi^{(1)}_{\zeta} (\theta) 
\in D(\A, {(4p+2, 4q)
})$.
Let 
$H_{i}  \in \A_{i}$
and $\alpha_{i} := \alpha_{H_{i} } 
(i = 1, 2)$.
 Since $\zeta\in D(\A, (4p+1, 4q+1))$ 
by Proposition \ref{123} (2), write
\[
\zeta(\alpha_{1} ) = \alpha_{1}^{4p+1} f_{1},
\,\,\,
\zeta(\alpha_{2} ) = \alpha_{2}^{4q+1} f_{2}
\,\,\,\,\,\, {(f_{1} \in S_{(\alpha_{1})},  f_{2} \in S_{(\alpha_{2})})}.   
\]
Compute
\begin{align*}
[
\Phi^{(1)}_{\zeta}
(\theta)]
 (\alpha_{1}) 
&=
Q_{1} (\nabla_{\theta} \zeta)(\alpha_{1})
-(4p+1)\theta(Q_{1}) \zeta(\alpha_{1})\\
&=
Q_{1} ({\theta} (\alpha_{1}^{4p+1}f_{1}))-
(4p+1)\theta(Q_{1}) 
\alpha_{1}^{4p+1}f_{1}\\
&=
Q_{1} \alpha_{1}^{4p+1}{\theta} (f_{1})
+
(4p+1) f_{1} \alpha_{1}^{4p}  Q_{1} \theta(\alpha_{1}) 
- 
(4p+1)  f_{1} \alpha_{1}^{4p+1} \theta(Q_{1})\\
&=
Q_{1} \alpha_{1}^{4p+1}{\theta} (f_{1})
-
(4p+1) f_{1} \alpha_{1}^{4p+2}
\left\{
(1/\alpha_{1}) \theta(Q_{1}) 
- 
(Q_{1}/\alpha_{1}^{2})   \theta(\alpha_{1})
\right\}
\\
&=
Q_{1} \alpha_{1}^{4p+1}{\theta} (f_{1})
-
(4p+1) f_{1} \alpha_{1}^{4p+2}
\theta(Q_{1}/\alpha_{1})
\in
 \alpha_{1}^{4p+2}
S_{(\alpha_{1} )}. 
\end{align*}
Also
\begin{align*}
[
\Phi^{(1)}_{\zeta}
(\theta)]
 (\alpha_{2}) 
&=
Q_{1} (\nabla_{\theta} \zeta)(\alpha_{2})
-(4p+1)\theta(Q_{1}) \zeta(\alpha_{2})\\
&=
Q_{1} ({\theta} (\alpha_{2}^{4q+1}f_{2}))-
(4p+1)\theta(Q_{1}) 
\alpha_{2}^{4q+1}f_{2}\\
&=
Q_{1} \alpha_{2}^{4q+1}{\theta} (f_{2})
+
(4q+1) f_{2} \alpha_{2}^{4q}  Q_{1} \theta(\alpha_{2}) 
- 
{(4p+1)}  f_{2} \alpha_{2}^{4q+1} \theta(Q_{1})\\
&\in
 \alpha_{2}^{4q}
S_{(\alpha_{2} )}. 
\end{align*}
This shows $\Phi^{(1)}_{\zeta} (\theta) \in
D(\A, (4p+2, 4q))$.
Next we will prove
that 
$\Phi^{(1)}_{\zeta} (\partial_{x_{1} } ) $
and
$\Phi^{(1)}_{\zeta} (\partial_{x_{2} } ) $
  form an $S$-basis for $D(\A, (4p+2, 4q))$.
Define $M(\theta_{1}, \theta_{2})
:=
\left[
\theta_{i}(x_{j})
\right]_{1\leq i, j\leq 2}. 
$   
Then 
\begin{align*}
\det M(
\Phi^{(1)}_{\zeta} (\partial_{x_{1} }),
\Phi^{(1)}_{\zeta} (\partial_{x_{2} }))
&=
\det M(
Q_{1} \nabla_{\partial_{x_{1} } } \zeta,
Q_{1} \nabla_{\partial_{x_{2} } } \zeta
)\\ 
&~~~-(4p+1)
\det M(
Q_{1} \nabla_{\partial_{x_{1} } } \zeta,
(\partial_{x_{2} }Q_{1} ) \zeta)\\
&~~~-(4p+1)
\det M((\partial_{x_{1} }Q_{1})\zeta, 
Q_{1} \nabla_{\partial_{x_{2} } } \zeta).
\end{align*} 
Note
\[
x_{1} (\nabla_{\partial_{x_{1} }}\zeta)
+
x_{2} (\nabla_{\partial_{x_{2} }}\zeta)
=
\nabla_{E} \zeta
=
\left\{
1+h(p+q)
\right\}
\zeta
\]
{because 
$
\nabla_{\partial_{x_{1}} } \zeta,
\nabla_{\partial_{x_{2}} } \zeta
$ 
are a basis for $D(\A, (4p, 4q))$ and
${\rm pdeg}\, \zeta = 1+h(p+q)$. 
}
Thus
\begin{align*}
&~~~\det M(
\Phi^{(1)}_{\zeta} (\partial_{x_{1} }),
\Phi^{(1)}_{\zeta} (\partial_{x_{2} }))\\
&=
Q_{1}^{2}  \det M(
\nabla_{\partial_{x_{1} } } \zeta,
\nabla_{\partial_{x_{2} } } \zeta
) 
-\frac{(4p+1)Q_{1} x_{2} (\partial_{x_{2} }Q_{1} )}{1+h(p+q)}
\det M(
\nabla_{\partial_{x_{1} } } \zeta,
\nabla_{\partial_{x_{2} } } \zeta)\\
&~~~-\frac{(4p+1)Q_{1} x_{1} (\partial_{x_{1} }Q_{1} )}{1+h(p+q)}
\det 
M(
\nabla_{\partial_{x_{1} } } \zeta,
\nabla_{\partial_{x_{2} } } \zeta)\\
&=
\left\{
Q_{1}^{2} -
\frac{(4p+1)Q_{1} (x_{1} (\partial_{x_{1} }Q_{1}) +
x_{2} (\partial_{x_{2} }Q_{1}))}{1+h(p+q)}
 \right\}
\det
M(
\nabla_{\partial_{x_{1} } } \zeta,
\nabla_{\partial_{x_{2} } } \zeta)\\
&\doteq
\left\{
1 -
\frac{(4p+1)h}{2(1+h(p+q))}
 \right\}
Q_{1}^{2} Q_{1}^{4p} Q_{2}^{4q}
=
\frac{2-h(2p-2q+1)}{2(1+h(p+q))}
Q_{1}^{4p+2} Q_{2}^{4q}.
\end{align*} 
Note that 
$2-h(2p-2q+1)\neq 0$ 
and
$1+h(p+q)\neq 0$ because $h\geq 4$.
Therefore
$\Phi^{(1)}_{\zeta} (\partial_{x_{1} })$
and
$\Phi^{(1)}_{\zeta} (\partial_{x_{2} })$ 
 form an $S$-basis for
$D(\A, (4p+2, 4q))$
thanks to Theorem \ref{T. Abe's criterion 2 -theorem-}.   
Thus $\Phi^{(1)}_{\zeta}  $  
is an $S$-linear
{bijection.}
$\square$ 
\end{proof}

We may apply 
Proposition \ref{Phiisom} to get
the following proposition:

\begin{proposition}
\label{exceptionalbasis}
Suppose that $(a_{1}, a_{2})$ is exceptional
and that either $p \geq 0$ or
$q \geq 0$ in Table 2.
Then,
for $i=1, 2$, 
$
\Phi^{(i)}_{\zeta}(\theta_{1})$
and
$\Phi^{(i)}_{\zeta}(\theta_{2})$
form an $S$-basis for $D(\A, (a_{1} , a_{2} ))$
as in Table 2.
\end{proposition}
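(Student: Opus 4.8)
The plan is to deduce the statement directly from Proposition~\ref{Phiisom}, whose proof already contains all of the real work; the only point that needs attention here is checking that the hypothesis of Proposition~\ref{Phiisom}, namely that $\zeta$ is $(2p,2q)$-universal, is actually in force.

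First I would unwind the notation in Table~2. By the convention fixed in Proposition~\ref{ordinarybasis}, the symbol $E^{(2p,2q)}$ denotes $E_1^{(2p,2q)}$ when $2q\geq 0$ and $E_2^{(2p,2q)}$ when $2p\geq 0$; since by the standing hypothesis at least one of $p\geq 0$, $q\geq 0$ holds, this is well defined (and when both hold the two derivations agree up to a nonzero scalar, so the ambiguity is harmless). Next I would invoke Proposition~\ref{Estuniversal}: because $s=2p$ and $t=2q$ are both even, the parity conditions $t-s\in 2\Z$ and $s-t\in 2\Z$ are automatic, so part~(1) gives that $E_1^{(2p,2q)}$ is $(2p,2q)$-universal when $q\geq 0$, and part~(2) gives that $E_2^{(2p,2q)}$ is $(2p,2q)$-universal when $p\geq 0$. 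Either way, $\zeta:=E^{(2p,2q)}$ is $(2p,2q)$-universal, which is exactly what Proposition~\ref{Phiisom} requires.

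Then I would apply Proposition~\ref{Phiisom} for this $\zeta$: it asserts that $\Phi^{(1)}_\zeta\colon\Der_S\to D(\A,(4p+2,4q))$ and $\Phi^{(2)}_\zeta\colon\Der_S\to D(\A,(4p,4q+2))$ are $S$-linear bijections. Since $\partial_{x_1},\partial_{x_2}$ form an $S$-basis for $\Der_S$ and an $S$-module isomorphism carries a basis to a basis, $\Phi^{(i)}_\zeta(\partial_{x_1}),\Phi^{(i)}_\zeta(\partial_{x_2})$ form an $S$-basis for $D(\A,(4p+2,4q))$ when $i=1$ and for $D(\A,(4p,4q+2))$ when $i=2$; taking $\theta_1=\partial_{x_1}$ and $\theta_2=\partial_{x_2}$ as in Table~2 then gives the claim. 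I do not expect a genuine obstacle beyond this bookkeeping: the substantive steps---that $\Phi^{(i)}_\zeta$ really lands in the prescribed module, and that the associated coefficient determinant is $\doteq Q_1^{4p+2}Q_2^{4q}$ (resp.\ $Q_1^{4p}Q_2^{4q+2}$), so that Theorem~\ref{T. Abe's criterion 2 -theorem-} applies---have already been carried out inside the proof of Proposition~\ref{Phiisom}, and the restriction to $p\geq 0$ or $q\geq 0$ in the statement is there precisely to guarantee the universality of $\zeta$ used in that proposition.
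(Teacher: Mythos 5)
Your proposal is correct and follows exactly the paper's route: the paper states Proposition \ref{exceptionalbasis} as an immediate consequence of Proposition \ref{Phiisom}, with the universality of $\zeta=E^{(2p,2q)}$ supplied by Proposition \ref{Estuniversal} under the hypothesis $p\geq 0$ or $q\geq 0$, and the bijection carrying the basis $\partial_{x_1},\partial_{x_2}$ of $\Der_S$ to a basis of $D(\A,(a_1,a_2))$. The only extra (and unneeded) claim in your write-up is the parenthetical assertion that $E_1^{(2p,2q)}$ and $E_2^{(2p,2q)}$ agree up to a scalar when both $p\geq 0$ and $q\geq 0$; the paper never asserts this, and it is not required since the convention simply selects one admissible choice.
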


Proposition \ref{Estuniversal}
asserts that
$
E_{1}^{(s, t)}
$  
is $(s, t)$-universal 
when 
$s-t\in 2\Z, t\geq 0$
and
that
$
E_{2}^{(s, t)}
$  
is $(s, t)$-universal 
when 
$t-s\in 2\Z, s\geq 0$.
So Tables 1 and 2 show how to construct a basis for
$D(\A, (a_{1}, a_{2}  ))$ when $a_{1} \geq 0$ or
$a_{2} \geq 0$.
We will construct a basis for $D(\A, (a_{1}, a_{2}))$ 
in the remaining case that
$a_{1} < 0$ and $a_{2} < 0$. 
Let
\begin{align*} 
\Omega(\A, \bfk) 
&:=
(I^{*})^{-1} (D(\A, -\bfk))\\
&= 
\{
\omega\in\Omega(\A, -\infty)
\mid
I^{*} (\omega, d\alpha_{H})
\in 
\alpha_{H}^{-\bfk(H)} S_{(\alpha_{H})} 
\mbox{\rm ~for all~}
H\in \A
\}.
\end{align*}

\begin{theorem}
{\rm (Ziegler \cite{Ziegler}, Abe \cite[Theorem 1.7]{Abe})}
The natural $S$-bilinear coupling
\[
%
D(\A, \bfk) \times 
\Omega(\A, \bfk) 
\longrightarrow
S
\]
is non-degenerate and provides $S$-linear
isomorphisms:
\[
\alpha : D(\A, \bfk) \rightarrow \Omega(\A, \bfk)^{*},
\,\,\,\,
\beta : \Omega(\A, \bfk) \rightarrow D(\A, \bfk)^{*}.
\]
\end{theorem}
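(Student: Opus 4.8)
The plan is to realize the coupling as the restriction of the tautological contraction pairing $\Der_{F}\times\Omega_{F}\to F$, $(\theta,\omega)\mapsto\langle\theta,\omega\rangle:=\omega(\theta)$, which is $F$-bilinear and perfect, and then to check every assertion \emph{locally at the height-one primes of $S$}. This reduction is legitimate because $S$ is a normal domain, so $S=\bigcap_{\mathfrak p}S_{\mathfrak p}$ over the height-one primes, and because all four modules in sight are reflexive: $D(\A,\bfk)$ is reflexive, being cut out inside $\Der_{F}$ by conditions at the height-one primes of $S$; $\Omega(\A,\bfk)\cong D(\A,-\bfk)$ via the $S$-linear isomorphism $I^{*}$ is reflexive likewise; and $\Omega(\A,\bfk)^{*}$, $D(\A,\bfk)^{*}$ are reflexive as $S$-duals of finitely generated modules over the normal domain $S$ (in fact, since $\ell=2$, all of these modules are free). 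A finitely generated reflexive $S$-module $M$ equals $\bigcap_{\mathfrak p}M_{\mathfrak p}$ inside $M\otimes_{S}F$, so it suffices to prove, for every height-one prime $\mathfrak p$: (a) $\omega(\theta)\in S_{\mathfrak p}$ for $\theta\in D(\A,\bfk)$ and $\omega\in\Omega(\A,\bfk)$, and (b) the localized maps $\alpha_{\mathfrak p}$ and $\beta_{\mathfrak p}$ are isomorphisms.

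I would then distinguish two kinds of height-one primes. If $\mathfrak p$ contains no $\alpha_{H}$, then $Q$ is a unit in $S_{\mathfrak p}$, hence $D(\A,\bfk)_{\mathfrak p}=\Der_{S_{\mathfrak p}}$ and $\Omega(\A,\bfk)_{\mathfrak p}=\Omega_{S_{\mathfrak p}}$, and the localized coupling is the standard perfect pairing between $\Der_{S_{\mathfrak p}}$ and $\Omega_{S_{\mathfrak p}}$. If $\mathfrak p=(\alpha_{H})$, I would choose, as in the proof of Proposition \ref{123}, an orthonormal basis $x_{1},\dots,x_{\ell}$ of $V^{*}$ with $H=\ker(x_{1})$; then $D(\A,\bfk)_{(x_{1})}$ is free over the discrete valuation ring $S_{(x_{1})}$ with basis $x_{1}^{\bfk(H)}\partial_{x_{1}},\partial_{x_{2}},\dots,\partial_{x_{\ell}}$, and applying $(I^{*})^{-1}$ to the analogous basis of $D(\A,-\bfk)_{(x_{1})}$ shows that $\Omega(\A,\bfk)_{(x_{1})}$ is free with basis $x_{1}^{-\bfk(H)}dx_{1},dx_{2},\dots,dx_{\ell}$; under contraction these two bases are mutually dual. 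In both cases the localized pairing is $S_{\mathfrak p}$-valued and perfect, which gives (a) and (b) at $\mathfrak p$.

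Granting (a) for all height-one $\mathfrak p$, normality forces $\omega(\theta)\in S$, so the coupling is genuinely $S$-valued and induces $S$-linear maps $\alpha\colon D(\A,\bfk)\to\Omega(\A,\bfk)^{*}$ and $\beta\colon\Omega(\A,\bfk)\to D(\A,\bfk)^{*}$. Injectivity is immediate from the $F$-perfectness of contraction: if $0\neq\theta\in D(\A,\bfk)$, there is $\omega\in\Omega_{F}$ with $\omega(\theta)\neq0$, and $Q^{N}\omega\in\Omega(\A,\bfk)$ for $N\gg0$ while still pairing nontrivially with $\theta$; symmetrically for $\beta$. Surjectivity then follows by intersecting (b) over all height-one $\mathfrak p$ and using reflexivity of source and target, so $\alpha$ and $\beta$ are isomorphisms. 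I expect the one genuinely nontrivial point to be the explicit free local models of $D(\A,\bfk)$ and $\Omega(\A,\bfk)$ at the hyperplane primes $(\alpha_{H})$ in the second paragraph — this is where the multiplicity $\bfk$ actually enters — together with checking the reflexivity inputs; everything else is formal. (This is the duality of Ziegler \cite{Ziegler} and Abe \cite[Theorem 1.7]{Abe}; cf. also \cite{Abe-Terao2}.)
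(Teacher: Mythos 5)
The paper does not prove this statement at all: it is imported as a quoted result of Ziegler \cite{Ziegler} and Abe \cite[Theorem 1.7]{Abe}, so there is no in-paper proof to compare yours with. Judged on its own, your outline is correct and is essentially the argument of the cited sources: interpret the coupling as the contraction pairing inside $\Der_F\times\Omega_F$, prove that it is perfect after localizing at every height-one prime of $S$ (the primes $(\alpha_H)$ carrying the multiplicity, all other height-one primes seeing only $\Der_{S_{\mathfrak p}}\times\Omega_{S_{\mathfrak p}}$), and then recover the global statement from normality of $S$ together with reflexivity of $D(\A,\bfk)$, $\Omega(\A,\bfk)$ and the dual modules; injectivity comes from $F$-perfectness and surjectivity from $\alpha_F^{-1}(\varphi)\in\bigcap_{\mathfrak p}D(\A,\bfk)_{\mathfrak p}=D(\A,\bfk)$. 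The dual local bases $x_1^{\bfk(H)}\partial_{x_1},\partial_{x_2},\dots,\partial_{x_\ell}$ and $x_1^{-\bfk(H)}dx_1,dx_2,\dots,dx_\ell$ are the right local models, and using $I^{*}$ in orthonormal coordinates to transport the model from $D(\A,-\bfk)$ to $\Omega(\A,\bfk)$ is legitimate since the paper defines $\Omega(\A,\bfk)=(I^{*})^{-1}(D(\A,-\bfk))$.

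The two points you explicitly defer are exactly where the content lies, and since $\bfk$ may take negative values you cannot fall back on Ziegler's nonnegative case; they should be written out. First, the identification $D(\A,\bfk)_{(\alpha_H)}=\alpha_H^{\bfk(H)}S_{(\alpha_H)}\partial_{x_1}+\sum_{j\geq 2}S_{(\alpha_H)}\partial_{x_j}$: the inclusion of the localization into the right-hand side uses precisely the second (``perpendicular'') clause in the definition of $D(\A,-\infty)$, which forces $\theta(x_j)\in S_{(x_1)}$ for $j\geq 2$ (the same reflection argument as in the proof of Proposition \ref{123}), while the reverse inclusion needs a clearing-of-denominators/prime-avoidance step. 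Second, reflexivity of $D(\A,\bfk)$ for arbitrary $\Z$-valued $\bfk$ amounts to the equality $D(\A,\bfk)=\bigcap_{\mathfrak p}D(\A,\bfk)_{\mathfrak p}$ over height-one primes, which again follows from the definition plus normality but deserves an explicit sentence (in the two-variable graded setting you may instead note that reflexive implies free). With these details supplied your argument is complete and matches the localization framework of \cite{Abe} and \cite{Abe-Terao2}.
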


Thus we have the following proposition: 

\begin{proposition}
\label{bothnegative}
Let $(a_{1}, a_{2})\in (\Z_{<0})^{2}$
and $x_{1}, x_{2}  $ be an orthonormal basis.
Let $\theta_{1}, \theta_{2}$ be an $S$-basis for
$D(\A, (-a_{1}, -a_{2}))$.
Then 
\[
\eta_{1} 
:= 
g_{11} \partial_{x_{1} }  
+
g_{21} \partial_{x_{2} },  \,
\eta_{2} 
:= 
g_{12} \partial_{x_{1} }  
+
g_{22} \partial_{x_{2} },
\]
form an $S$-basis for $D(\A, (a_{1}, a_{2}))$.  
Here 
\[
\begin{pmatrix}
g_{11} & g_{12} \\
g_{21} & g_{22}
\end{pmatrix} 
=
\begin{pmatrix}
\theta_{1}(x_{1}) & \theta_{1}(x_{2}) \\
\theta_{2}(x_{1}) & \theta_{2}(x_{2})
\end{pmatrix}
^{-1}
=
Q_{1}^{a_{1} } Q_{2}^{a_{2} } 
\begin{pmatrix}
\theta_{2}(x_{2}) & -\theta_{1}(x_{2}) \\
-\theta_{2}(x_{1}) & \theta_{1}(x_{1})
\end{pmatrix}. 
\]

\end{proposition}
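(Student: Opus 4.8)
The plan is to deduce this from the non-degenerate duality just stated, transported through the isomorphism $I^{*}$, and to read off the explicit matrix formula by linear algebra over $F$. First I would pass to the $1$-form side. For $i=1,2$ set $\omega_{i}:=(I^{*})^{-1}(\theta_{i})\in\Omega_{F}$. Since $x_{1},x_{2}$ is orthonormal we have $I^{*}(dx_{j})=\partial_{x_{j}}$, and $I^{*}$ is $F$-linear, so $I^{*}\bigl(\sum_{j}\theta_{i}(x_{j})\,dx_{j}\bigr)=\sum_{j}\theta_{i}(x_{j})\,\partial_{x_{j}}=\theta_{i}$; thus $\omega_{i}=\sum_{j=1}^{2}\theta_{i}(x_{j})\,dx_{j}$. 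By the very definition $\Omega(\A,(a_{1},a_{2}))=(I^{*})^{-1}(D(\A,(-a_{1},-a_{2})))$, and $\theta_{1},\theta_{2}$ is an $S$-basis for $D(\A,(-a_{1},-a_{2}))$, so $\omega_{1},\omega_{2}$ form an $S$-basis for $\Omega(\A,(a_{1},a_{2}))$.

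Next I would invoke the duality theorem above with $\bfk=(a_{1},a_{2})$: the $S$-linear isomorphism $\alpha\colon D(\A,(a_{1},a_{2}))\to\Omega(\A,(a_{1},a_{2}))^{*}$ lets me define $\eta'_{1},\eta'_{2}\in D(\A,(a_{1},a_{2}))$ as the $\alpha$-preimages of the $S$-basis of $\Omega(\A,(a_{1},a_{2}))^{*}$ dual to $\omega_{1},\omega_{2}$. Then $\eta'_{1},\eta'_{2}$ is an $S$-basis for $D(\A,(a_{1},a_{2}))$ and, since the coupling in question is the contraction pairing $(\theta,\omega)\mapsto\omega(\theta)$, one has $\omega_{i}(\eta'_{k})=\delta_{ik}$ for all $i,k$. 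It remains to identify $\eta'_{k}$ with the $\eta_{k}$ of the statement. Writing $\eta'_{k}=\sum_{j}\eta'_{k}(x_{j})\,\partial_{x_{j}}$ in $\Der_{F}$, the relations $\delta_{ik}=\omega_{i}(\eta'_{k})=\sum_{j}\theta_{i}(x_{j})\,\eta'_{k}(x_{j})$ say exactly that the matrix $[\eta'_{k}(x_{j})]$ is the inverse of $[\theta_{i}(x_{j})]$. Since $\det[\theta_{i}(x_{j})]\doteq Q_{1}^{-a_{1}}Q_{2}^{-a_{2}}\neq 0$ by Theorem \ref{T. Abe's criterion 2 -theorem-}, that inverse exists over $F$ and its entries are precisely the $g_{jk}$ of the statement (the adjugate of $[\theta_{i}(x_{j})]$ divided by its determinant), so $\eta'_{k}=\eta_{k}$ and $\eta_{1},\eta_{2}$ is an $S$-basis for $D(\A,(a_{1},a_{2}))$. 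Alternatively, once $\eta_{1},\eta_{2}\in D(\A,(a_{1},a_{2}))$ has been established this way, one may finish instead by Theorem \ref{T. Abe's criterion 2 -theorem-} directly, using $\det[\eta_{j}(x_{i})]=\det[\theta_{i}(x_{j})]^{-1}\doteq Q_{1}^{a_{1}}Q_{2}^{a_{2}}=Q^{(a_{1},a_{2})}$.

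The step needing the most care is the identification $\eta'_{k}=\eta_{k}$: it hinges on verifying that the $S$-bilinear coupling of the duality theorem is the contraction pairing and on transporting it correctly through $I^{*}$ in the orthonormal coordinates, so that $\omega_{i}(\eta)=\sum_{j}\theta_{i}(x_{j})\,\eta(x_{j})$ for every $\eta\in\Der_{F}$. Everything else — the value of $\det[\theta_{i}(x_{j})]$, the passage to $\Omega(\A,(a_{1},a_{2}))$, and the matrix inversion producing the explicit $g_{jk}$ — is routine.
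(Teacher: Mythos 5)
Your proposal is correct and follows exactly the route the paper intends: Proposition \ref{bothnegative} is stated there as an immediate consequence of the Ziegler--Abe duality theorem together with the definition $\Omega(\A,\bfk)=(I^{*})^{-1}(D(\A,-\bfk))$, and your argument simply fills in the details (transport of the basis through $I^{*}$ in orthonormal coordinates, the dual basis under the contraction pairing, and the inverse-matrix identification, with Theorem \ref{T. Abe's criterion 2 -theorem-} supplying the determinant). No gaps.
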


\section{Conclusion}

Let $\A$ 
be a two-dimensional irreducible Coxeter  arrangement 
such that
$|\A|$ is even with $|\A|\geq 4$.
We have constructed an explicit basis for $D(\A, (a_{1}, a_{2}))$ 
for an arbitrary equivariant multiplicity $\bfk = (a_{1}, a_{2})$
with $a_{1}, a_{2} \in \Z$.
Our recipes are presented in the Tables 1, 2,
Propositions \ref{ordinarybasis}, \ref{exceptionalbasis} 
and \ref{bothnegative}.   
Lastly we show Table 4
for the exponents.

\begin{table}[h]
\label{Table4}
\begin{center}
\begin{tabular}{|c|c|c|c|c|}
\hline
  $a_{1}$ 
& $a_{2}$ 
& $a_{1} - a_{2} $ 
& exponents of $(\A, (a_{1}, a_{2}))$ 
& their difference
\\
\hline
  odd 
& odd 
& $\equiv 0 \,(\mbox{\rm mod~} 4)$
& $
\frac{(a_{1} +a_{2} -2)h}{4} +1,
\frac{(a_{1} +a_{2} +2)h}{4} -1
 $ 
& $ h-2$\\
\hline
  odd 
& odd 
& $\equiv 2 \,(\mbox{\rm mod~} 4)$
& $
\frac{(a_{1} +a_{2})h}{4} +1,
\frac{(a_{1} +a_{2})h}{4} -1
 $ 
& $ 2$\\
\hline
  odd 
& even
& 
& 
$
\frac{(a_{1} +a_{2}-1)h}{4} +1,
\frac{(a_{1} +a_{2}+1)h}{4} -1
 $ 
& $ (h/2)-2$\\
\hline
  even 
& odd
& 
& 
$
\frac{(a_{1} +a_{2}-1)h}{4} +1,
\frac{(a_{1} +a_{2}+1)h}{4} -1
 $ 
& $ (h/2)-2$\\
\hline
even
& even
& 
& $
\frac{(a_{1} +a_{2})h}{4},
\frac{(a_{1} +a_{2})h}{4}
 $ 
& $0$\\
\hline
\end{tabular}
\end{center}
\caption{The exponents of $(\A, (a_{1}, a_{2}))
\,\,\,\,\,(a_{1}, a_{2} \in\Z )$} 
\end{table}


{{\bf Acknowledgement}
The author expresses his
gratitude to
Professor
Hiroaki Terao for his patient guidance
and many helpful discussions.
He also thanks 
the referee for proposing many
improvements of an earlier version.
}




{\footnotesize
{
\noindent
Atsushi WAKAMIKO \\
{\sc
2-8-11
Aihara, Midori-ku,
Sagamihara-shi, Kanagawa,
252-0141
Japan
\\
}
{\it e-mail}: atsushi.wakamiko@gmail.com
}
}

\end{document}